\setlist[enumerate]{label={(\roman*)}}
\theoremstyle{plain}
\newtheorem{theorem}{Theorem}
\newtheorem{lemma}[theorem]{Lemma}
\newtheorem{proposition}[theorem]{Proposition}
\theoremstyle{definition}
\newtheorem{definition}[theorem]{Definition}
\newtheorem{example}[theorem]{Example}
\theoremstyle{remark}
\newtheorem{remark}{Remark}
\newtheorem*{acknowledgment}{Acknowledgement}
\numberwithin{theorem}{section}
\DeclareMathAlphabet\urwscr{U}{urwchancal}{m}{n}%
\DeclareMathAlphabet\rsfscr{U}{rsfso}{m}{n}
\DeclareMathAlphabet\euscr{U}{eus}{m}{n}
\DeclareMathAlphabet\stixcal{LS2}{stixcal}{m} {n}
\newcommand{\Filt}[1]{\mbox{\rm Filt}(#1)}
\newcommand{\card}[1]{|#1|}
\newcommand{\cf}[1]{\mbox{\rm cf}(#1)}
\newcommand{\im}{\mbox{\rm{Im\,}}}
\newcommand{\Ext}[4]{\operatorname{Ext}^{#1}_{#2}(#3,#4)}
\newcommand{\rmod}[1]{\mbox{\rm{Mod}--}{#1}}
\newcommand{\LS}{\mbox{\rm{LS}}}
\begin{document}

\title{Deconstructible abstract elementary classes of modules and categoricity}

\author{Jan \v Saroch and Jan Trlifaj}
\address{Charles University, Faculty of Mathematics
and Physics, Department of Algebra \\
Sokolovsk\'{a}~83, 186 75 Prague 8, Czech Republic}
\email{saroch@karlin.mff.cuni.cz}
\email{trlifaj@karlin.mff.cuni.cz}

\begin{abstract} We prove a version of Shelah's Categoricity Conjecture for arbitrary deconstructible classes of modules. Moreover, we show that if $\mathcal A$ is a deconstructible class of modules that fits in an abstract elementary class $(\mathcal A,\preceq)$ such that (1) $\mathcal A$ is closed under direct summands and (2) $\preceq$ refines direct summands, then $\mathcal A$ is closed under arbitrary direct limits. In an Appendix, we prove that the assumption (2) is not needed in some models of ZFC.  
\end{abstract}

\date{\today}

\thanks{The authors acknowledge the support by GA\v CR grant no.\ 23-05148S, and by the Fields Institute within the Thematic Program on Set Theoretic Methods in Algebra, Dynamics and Geometry.}
	
\subjclass[2020]{Primary: 03C95, 16E30. Secondary: 03C35, 16D10.}

\keywords{abstract elementary class, categoricity, deconstructible class of modules, direct limits, Shelah's Categoricity Conjecture.}

\maketitle

\section{Introduction}

The classic fact that each projective module decomposes into a direct sum of countably generated modules proved by Kaplansky, and the Matlis Decomposition Theorem for injective modules over commutative noetherian rings, are among the key tools of the structure theory of modules. However, decomposability is rare for general classes of (not necessarily finitely generated) modules. The weaker property of deconstructibility is much more frequent. 

A class of modules $\mathcal D$ is \emph{deconstructible}, if $\mathcal D$ contains a subset $\mathcal C$ such that $\mathcal D$ is the class of all transfinite extensions of modules from $\mathcal C$. If $\mathcal D$ is a deconstructible class of modules, then for each $n \geq 0$, the class $\mathcal D_n$ of all modules of $\mathcal D$-resolution dimension $\leq n$ is also deconstructible \cite{ST}. In particular, so is the class of all modules of projective dimension $\leq n$ over any ring \cite{AEJO}. 

From the point of view of homological algebra, the key property of deconstructible classes is that they provide precovers (or right approximations) \cite{SS}, making it possible to develop relative homological algebra \cite{EJ}. For example, Enochs' proof of the Flat Cover Conjecture in \cite{BEE} is based on the deconstructibility of the class of all flat modules over any ring. 

\medskip
In \cite{BET}, particular deconstructible classes of modules, $\mathcal A _e$, were used to construct interesting abstract elementary classes (AECs) of modules, $(\mathcal A _e, \preceq _e)$, called the AECs of roots of Ext. These AECs provided a link between representation theory and homological algebra on the one hand, and abstract model theory on the other hand. It turned out that all the AECs of roots of Ext have the additional property of being closed under arbitrary direct limits, hence they provide covers of modules. However, the question about the number of models of various cardinalities in $\mathcal A _e$ remained open, cf.\ \cite[Question 4.1]{BET}.    

\medskip
In this paper, we are interested in general deconstructible classes of modules, $\mathcal A$, and in particular in those which equipped with an appropriate relation of a strong submodule form a general abstract elementary class (AEC), $(\mathcal A, \preceq)$, in the sense of \cite{B} and \cite{S}.  

Our first main result, Theorem \ref{t:closedl}, says that the closure under direct limits, proved in the particular case of the deconstructible classes $\mathcal A _e$ in \cite{BET}, is a much more general phenomenon. We show that if $\mathcal A$ is any deconstructible class of modules closed under direct summands which fits in an AEC $(\mathcal A, \preceq)$ such that the partial order $\preceq$ refines direct summands, then $\mathcal A$ is closed under arbitrary direct limits. Moreover, in the Appendix, we show that the assumption that $\preceq$ refines direct summands is actually not needed in some models of ZFC.
  
\medskip
Shelah Categoricity Conjecture (SCC) is a key open problem concerning the number of models for AECs consisting of general structures, see e.g., \cite[N.4.3]{S}, \cite{ABV}, and \cite{V}. In the particular setting of AECs of roots of Ext, SCC was proved in \cite{T}. Our second main result, Theorem \ref{maindec}, proves a version of SCC for arbitrary deconstructible classes of modules.

\section{Preliminaries}

\medskip
\subsection{Abstract elementary classes}

We begin by recalling the general notion of an AEC in the setting of the first-order theories of modules.  

In what follows, $R$ will denote an (associative, unital) ring, $\rmod R$ the category of all (unitary right $R$-) modules, and $\mathcal A$ a class of modules. Moreover, $\preceq$ will denote a partial order on $\mathcal A$ such that for all $A, B \in \mathcal A$, if $A \preceq B$, then $A$ is a submodule of $B$. In what follows, we will require both $\mathcal A$ and $\preceq$ to be closed under isomorphisms (in the latter case, this means that $N^\prime \preceq M^\prime$, whenever $N \preceq M$ and there is an isomorphism $f\colon M \to M^\prime$ such that $f(N) = N^\prime$). We will say that $\preceq$ \emph{refines direct summands} provided that $A \preceq B$ whenever $A, B \in \mathcal A$ and $A$ is a direct summand in $B$. 

If $A, B \in \mathcal A$ and $A \preceq B$, then $A$ is called a \emph{strong submodule} of $B$. A monomorphism $f\colon A \to B$ with $A, B \in \mathcal A$ is \emph{strong} in case $f(A) \preceq B$. A module $M$ is \emph{strongly presented} in case there exists a short exact sequence $0 \to A \overset{f}\to B \to  M \to 0$ such that $A, B \in \mathcal A$ and $f$ is strong.  
 
If $\delta$ is an ordinal, then a chain of modules $( A_i \mid i < \delta )$ with $A_i \in \mathcal A$ for all $i < \delta$ is \emph{continuous} provided that $A_i = \bigcup_{j<i} A_j$ for each limit ordinal $i < \delta$, and it is a \emph{continuous $\preceq$-increasing chain} in $\mathcal A$ if moreover $A_i$ is a strong submodule of $A_{i+1}$ for each $i+1 < \delta$.  

\begin{definition}\label{defroots}
A pair $\mathbb A = (\mathcal A, \preceq )$ is an \emph{abstract elementary class} (or an \emph{AEC}) of modules, in case the following conditions are satisfied:
\begin{itemize}
\item[{\rm{(A1)}}] If $( A_i \mid i < \delta )$ is a continuous $\preceq$-increasing chain in $\mathcal A$, then
\begin{enumerate}
\item $\bigcup_{i<\delta} A_i \in \mathcal A$,
\item $A_j \preceq \bigcup_{i<\delta}A_i$ for each $j<\delta$, and
\item if $M \in \mathcal A$ and $A_i \preceq  M$ for each $i<\delta$, then $\bigcup_{i<\delta}A_i \preceq M$.
\end{enumerate}
\item[{\rm{(A2)}}] If $A,B,C \in \mathcal A$, $A \preceq C$, $B \preceq C$, and $A$ is a submodule of $B$, then $A \preceq B$.
\item[{\rm{(A3)}}] There exists a cardinal $\lambda$ such that, if $A$ is a submodule of a module $B \in \mathcal A$, then there exists
$A^\prime \in \mathcal A$ such that $A \subseteq A^\prime \preceq B$, and $|A^\prime| \leq |A| + \lambda$. The least such infinite cardinal $\lambda \geq \card R$ is called the \emph{L\" owenheim--Skolem number} of $\mathbb A$, and denoted by $\LS(\mathbb A)$.
\end{itemize}

\medskip
An AEC \emph{of the roots of Ext} is an abstract elementary class of modules of the form $(\mathcal A, \preceq _e)$ where 

\begin{itemize}
\item $\mathcal A$ is the \emph{left-hand class of a hereditary cotorsion pair}, that is, 
$$\mathcal A  = {}^{\perp_{\infty}} \mathcal C := \{ M \in \rmod R \mid \Ext iRMC = 0 \hbox { for all } i > 0 \hbox{ and all } C \in \mathcal C \}$$ for a class of modules $\mathcal C$, and 
\item for all $A, B \in \mathcal A$ such that $A$ is a submodule of $B$, we have $A \preceq _e B$, iff $B/A \in \mathcal A$.    
\end{itemize}
Notice that $\mathcal A = {}^{\perp_{\infty}} \mathcal C$ is closed under (arbitrary) direct sums and direct summands, and $\preceq _e$ refines direct summands.
\end{definition}

Let $\mathcal A$ be a class of modules. We will denote by $\varinjlim \mathcal A$ ( $\varinjlim _\omega \mathcal A$ ) the class of all modules $M$ such that $M$ is the direct limit of a direct system 
(a countable direct system) consisting of modules from $\mathcal A$. For example, if $\mathcal A = \mathcal P_0$ is the class of all projective modules, then $\varinjlim \mathcal A = \mathcal F _0$ is the class of all flat modules. A class of modules $\mathcal A$ is \emph{closed under (countable) direct limits} in case  $\mathcal A = \varinjlim \mathcal A$ ($\mathcal A = \varinjlim _\omega \mathcal A$). 
   
\medskip
\subsection{Deconstructible classes}

For a class $\mathcal C$ of modules, $\Filt{\mathcal C}$ will denote the class of all \emph{$\mathcal C$-filtered} modules (also called \emph{transfinite extensions} of the modules from $\mathcal C$). These are the modules $M$ that possess a \emph{$\mathcal C$-filtration}, that is, an increasing chain of submodules, $\mathcal M = ( M_\alpha \mid \alpha \leq \sigma )$, such that $M_0 = 0$, $M_\sigma = M$, $M_\alpha = \bigcup_{\beta < \alpha} M_\beta$ for each limit ordinal $\alpha \leq \sigma$, and $M_{\alpha + 1}/M_{\alpha} \cong C_\alpha$ for some $C_\alpha \in \mathcal C$ for each $\alpha < \sigma$. The ordinal $\sigma$ will be called the \emph{length} of the $\mathcal C$-filtration $\mathcal M$. If $\mathcal C = \rmod R$, we simply say that $\mathcal M$ is a~\emph{filtration}, instead of $\rmod R$-filtration.

For an infinite cardinal $\lambda$, we will also use the term \emph{$\lambda$-filtration} to denote a~filtration $\mathcal M = ( M_\alpha \mid \alpha \leq \lambda)$ where $M_\alpha$ is $<\!\lambda$-generated for each $\alpha<\lambda$. Notice that if $\lambda$ is a~regular uncountable cardinal, then any two $\lambda$-filtrations of a~given ($\lambda$-generated) module $M$ coincide on a club (= closed and unbounded subset) in $\lambda$, cf.\ \cite[II.4.12]{EM}.  
  
Let $\mathcal A \subseteq \rmod R$. If $\kappa$ is an infinite cardinal, then $\mathcal A$ is called \emph{$\kappa$-deconstructible} provided that $\mathcal A = \Filt{\mathcal A ^{< \kappa}}$ where $\mathcal A ^{< \kappa}$ denotes the class of all $< \kappa$-presented modules in $\mathcal A$. The class $\mathcal A$ is \emph{deconstructible} provided it is $\kappa$-deconstructible for some infinite cardinal $\kappa$. Each deconstructible class of modules $\mathcal A$ is closed under transfinite extensions, that is, it satisfies $\mathcal A = \Filt{\mathcal A}$, so in particular, $\mathcal A$ is closed under extensions and arbitrary direct sums. 

While most classes closed under transfinite extensions occurring in homological algebra are deconstructible, this is not always the case. 
The class $\mathcal F \mathcal M$ of all flat Mittag-Leffler (= $\aleph_1$-projective) modules over any non-right perfect ring satisfies $\mathcal F \mathcal M = \Filt{\mathcal F \mathcal M}$, but it is not deconstructible (see Example \ref{FM} below). Further, the deconstructibility of the class of all Whitehead groups is independent of ZFC+ GCH by \cite{ES}. However, Cox has recently proved consistency of deconstructibility of all left hand classes of cotorsion pairs over any right hereditary ring \cite{C}.    

In \cite{BET}, the following necessary and sufficient condition on a left hand class of a hereditary cotorsion pair to fit in an AEC of the roots of Ext was proved:

\begin{theorem}\label{roots} \cite{BET} (see also \cite[Theorem 10.27]{GT}). Let $\mathcal A  = {}^{\perp_{\infty}} \mathcal C$ for a class of modules $\mathcal C$. Then $(\mathcal A, \preceq _e)$ is an AEC, iff the class $\mathcal A$ is deconstructible and closed under direct limits. 

Moreover, if $\mathcal A$ is $\kappa^+$-deconstructible, then the L\"{o}wenheim--Skolem number of $\mathcal A$ is $\leq \kappa$. 
\end{theorem}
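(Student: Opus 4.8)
The statement is an equivalence together with a quantitative refinement, so the plan is to prove each implication separately and to read off the Löwenheim--Skolem bound from the construction used in the forward direction. I will freely use that $\mathcal A = {}^{\perp_{\infty}}\mathcal C$ is closed under extensions, transfinite extensions, direct sums and summands, and---because it is cut out by the vanishing of all higher Ext---under kernels of epimorphisms between its members: given $0 \to A^\prime \to A \to A^{\prime\prime} \to 0$ with $A,A^{\prime\prime}\in\mathcal A$, the long exact sequence of $\Ext{n}{R}{-}{C}$ forces $\Ext{n}{R}{A^\prime}{C}=0$ for all $n\geq 1$ and all $C\in\mathcal C$, so $A^\prime\in\mathcal A$.

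Assume first that $\mathcal A$ is deconstructible and closed under direct limits; I would check (A1)--(A3). Axiom (A2) is precisely this hereditary property: from $A\preceq_e C$, $B\preceq_e C$ and $A\subseteq B$ we get $C/A,C/B\in\mathcal A$, and $0\to B/A\to C/A\to C/B\to 0$ yields $B/A\in\mathcal A$, i.e. $A\preceq_e B$. For (A1), the union of a continuous $\preceq_e$-increasing chain $(A_i\mid i<\delta)$ carries the filtration with factors $A_0$ and $A_{i+1}/A_i\in\mathcal A$, hence lies in $\Filt{\mathcal A}=\mathcal A$ (giving (i)), and the tail of this filtration above $A_j$ gives (ii); for (iii), if $A_i\preceq_e M$ for all $i$ then $M/\bigcup_i A_i=\varinjlim_i M/A_i$ is a direct limit of modules $M/A_i\in\mathcal A$ and so lies in $\mathcal A$ by hypothesis, whence $\bigcup_i A_i\preceq_e M$. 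Finally (A3) follows from the Hill Lemma applied to a fixed $\mathcal A^{<\kappa^+}$-filtration of $B$ (available by $\kappa^+$-deconstructibility; we may assume $\kappa\geq|R|$): it produces a family $\mathcal H$ of submodules, closed under sums and intersections, in which every quotient $P/N$ with $N\subseteq P$ both in $\mathcal H$ is again $\mathcal A^{<\kappa^+}$-filtered. Covering each element of a generating set of $A$ by a member of $\mathcal H$ of cardinality $\leq\kappa$ and taking the sum, one obtains $A^\prime\in\mathcal H$ with $A\subseteq A^\prime$, $|A^\prime|\leq|A|+\kappa$, $A^\prime=A^\prime/0\in\mathcal A$ and $B/A^\prime\in\mathcal A$, i.e. $A\subseteq A^\prime\preceq_e B$. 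This is exactly (A3) with $\lambda=\kappa$, which also proves the ``moreover'' clause $\LS(\mathbb A)\leq\kappa$.

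Conversely, assume $(\mathcal A,\preceq_e)$ is an AEC and put $\lambda=\LS(\mathbb A)$. For deconstructibility I would show $\mathcal A=\Filt{\mathcal A^{<\lambda^+}}$ by building, for each $B\in\mathcal A$, a continuous $\preceq_e$-increasing chain $(B_\alpha)$ with $B_0=0$, union $B$, and all factors in $\mathcal A^{<\lambda^+}$. At a successor step, given $B_\alpha\preceq_e B$ and $x\in B\setminus B_\alpha$, apply (A3) inside $B/B_\alpha\in\mathcal A$ to the cyclic submodule generated by $x+B_\alpha$: one gets $\overline{A^\prime}\preceq_e B/B_\alpha$ with $|\overline{A^\prime}|\leq\lambda$, and its preimage $B_{\alpha+1}$ satisfies $B_\alpha\subseteq B_{\alpha+1}\preceq_e B$ (so $B_\alpha\preceq_e B_{\alpha+1}$ by (A2)) with factor $B_{\alpha+1}/B_\alpha\cong\overline{A^\prime}$ of cardinality $\leq\lambda$, hence $<\lambda^+$-presented since $\lambda\geq|R|$. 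Limit steps are handled by (A1)(i) and (A1)(iii), which keep $B_\alpha\in\mathcal A$ and $B_\alpha\preceq_e B$; letting $x$ run through an enumeration of $B$ makes the chain reach $B$.

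The remaining task, closure under direct limits, is the step I expect to be the main obstacle. For a chain $M_0\xrightarrow{g_0}M_1\xrightarrow{g_1}\cdots$ in $\mathcal A$ with colimit $M$, set $N=\bigoplus_{i<\omega}M_i\in\mathcal A$ and use the telescope short exact sequence $0\to N\xrightarrow{1-s}N\to M\to 0$, where $s$ is induced by the $g_i$. Inside $N$ put $U_n=(1-s)\bigl(\bigoplus_{i<n}M_i\bigr)$; then $(U_n)_n$ is a continuous $\preceq_e$-increasing chain with union $(1-s)(N)$, because $U_{n+1}/U_n\cong M_n\in\mathcal A$. The decisive computation is that $N/U_n\cong\bigoplus_{i\geq n}M_i\in\mathcal A$, so that $U_n\preceq_e N$ for every $n$; then (A1)(iii) gives $(1-s)(N)=\bigcup_n U_n\preceq_e N$, whence $M=N/(1-s)(N)\in\mathcal A$, i.e. $\varinjlim_\omega\mathcal A=\mathcal A$. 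Passing from countable to arbitrary direct limits is the part I expect to be hardest: one must reduce a general directed colimit to well-ordered continuous chains---for instance by filtering the index poset by directed subsets of strictly smaller cardinality and inducting on $|I|$---and then reduce those chains to the countable telescope, an argument in which the deconstructibility of $\mathcal A$ just proved is needed to control the chains of regular uncountable length. The two delicate points are precisely this reduction and the telescope bookkeeping (verifying $N/U_n\cong\bigoplus_{i\geq n}M_i$ and that each $U_n$ is genuinely strong in $N$); the verifications of (A1)(i), (A1)(ii) and (A2) are formal given the hereditary and transfinite-extension closure of $\mathcal A$.
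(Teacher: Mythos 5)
Your forward direction and the quantitative clause are correct and essentially the standard argument (the paper cites this result to \cite{BET} and \cite[Theorem~10.27]{GT} rather than reproving it): (A2) from the hereditary property of ${}^{\perp_\infty}\mathcal C$, (A1)(i)--(ii) from $\mathcal A=\Filt{\mathcal A}$, (A1)(iii) from $M/\bigcup_i A_i=\varinjlim_i M/A_i$, and (A3) with $\lambda=\kappa$ via the Hill family of a fixed $\mathcal A^{<\kappa^+}$-filtration of $B$. Likewise, your derivation of $\lambda^+$-deconstructibility from (A3), (A2) and (A1) in the converse is the standard chain construction and is fine, and your telescope computation is correct: $U_n\cong\bigoplus_{i<n}M_i$, $U_{n+1}/U_n\cong M_n$, and $N/U_n\cong\bigoplus_{i\geq n}M_i$ via $x\mapsto\bigl(\sum_{i\leq n}g_{n,i}(x_i),\,x_{n+1},\,x_{n+2},\dots\bigr)$, so (A1)(iii) indeed gives $(1-s)(N)\preceq_e N$ and closure under countable direct limits.

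The passage from countable to arbitrary direct limits, however, is a genuine gap, and your proposed reduction would fail as stated. Filtering a directed index set of regular uncountable cardinality $\mu$ by a continuous chain of directed subsets of smaller size replaces the system by the well-ordered system of partial colimits $(C_\alpha\mid\alpha<\mu)$, but its transition maps are in general \emph{not injective}: the $C_\alpha$ are not submodules of a common module, so (A1) says nothing about them, and no telescope is available either, since for uncountable regular $\mu$ there is no shift endomorphism of $\bigoplus_{\alpha<\mu}C_\alpha$ (a would-be $1-s$ does not exist, as limit coordinates would need to absorb infinitely many predecessors), hence no exact sequence $0\to N\to N\to\varinjlim C_\alpha\to 0$; the kernel of the canonical pure epimorphism $\bigoplus_{\alpha<\mu}C_\alpha\to\varinjlim C_\alpha$ is not visibly a union of strong submodules, and exhibiting a substitute for it is precisely the hard point. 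Deconstructibility alone does not ``control'' these chains: this is exactly why the present paper proves Theorem~\ref{t:closedl}, whose argument requires Šaroch's tree modules (Lemma~\ref{l:3.4}) over a strong limit singular $\lambda$ of cofinality $\mu$ with $\lambda^\mu>\lambda^{<\mu}=\lambda$, a transfinite induction on supports using both (A1)(i) and (A1)(iii) together with the fact that $\preceq_e$ refines direct summands, and a Hill-family argument to split off the colimit. Within this paper your proof can be completed cheaply at this point: having established $\lambda^+$-deconstructibility, note that $\mathcal A={}^{\perp_\infty}\mathcal C$ is closed under direct summands, $\preceq_e$ refines direct summands, and (A1)(i) and (A1)(iii) hold because $(\mathcal A,\preceq_e)$ is an AEC; Theorem~\ref{t:closedl} then yields closure under arbitrary direct limits, which is in fact the route the paper itself advertises for removing the direct-limit hypothesis from Theorem~\ref{roots}.
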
 

In Section \ref{limits}, we will show that closure under direct limits in Theorem \ref{roots} is actually a~consequence of deconstructibility. In fact, we will prove a~much more general fact: closure of an arbitrary class of modules $\mathcal A$ under direct limits follows from its deconstructibility whenever $\mathcal A$ is closed under direct summands, and there exists an AEC of the form $(\mathcal A, \preceq )$ such that the partial order $\preceq$ refines direct summands.

\medskip
\subsection{Categoricity}

Let $\lambda$ be an infinite  cardinal. Then a class of modules $\mathcal D$ is \emph{$\lambda$-categorical} (or \emph{categorical in $\lambda$}) provided that $\mathcal D$ contains a module of cardinality $\lambda$, and all modules of cardinality $\lambda$ contained in $\mathcal D$ are isomorphic. 

\medskip
A long-standing open problem of abstract model theory concerns categoricity for the general abstract elementary classes defined in \ref{defroots}, see \cite[N.4.3]{S}, \cite[Conjecture 1.3]{V}. It asks about the existence, and the relation to the L\"{o}wenheim--Skolem number, of a cardinal $\lambda$ such that $\lambda$-categoricity already implies $\lambda ^\prime$-categoricity for each $\lambda ^\prime \geq \lambda$: 

\medskip
\noindent {\bf Shelah's Categoricity Conjecture} (SCC): Let $(\mathcal A, \preceq)$ be an AEC with L\"{o}wenheim--Skolem number $\kappa$ such that $\mathcal A$ is categorical in a~cardinal $\lambda \geq \beth_{(2^\kappa)^+}$. Then $\mathcal A$ is categorical in all cardinals $\geq \beth_{(2^\kappa)^+}$. 

\medskip
Here, the function $\beth$ (beth) is defined by induction on the set of all ordinals as follows: $\beth_0 = \aleph_0$, $\beth_{\alpha + 1} = 2^{\beth_{\alpha}}$, and $\beth_\alpha = \sup_{\beta < \alpha} \beth_{\beta}$ when $\alpha$ is a limit ordinal. Note that $\kappa \leq \aleph _\kappa \leq \beth _\kappa$ for each cardinal $\kappa$.

SCC has been verified in a number of particular cases, see \cite{S}, \cite{V}, et al. For the particular case of the AEC's of roots of Ext, it was proved in \cite{T} with an improved bound of $\kappa^+$ in place of $\beth_{(2^\kappa)^+}$. 

Analogues of SCC for various settings beyond AECs have recently been proved in \cite{ABV} and \cite{M}. In Section \ref{Scategoricity}, we will prove the analogue of SCC for arbitrary deconstructible classes of modules (that is, even for those that do not fit in any AEC), with the improved bound of $(2^\kappa)^+$ in place of $\beth_{(2^\kappa)^+}$.   

For further properties of the notions defined above, we refer to \cite{EM} and \cite{GT}.

\section{Closure under direct limits}\label{limits}

In general, deconstructible classes of modules are not closed under direct summands---just consider the particular case of free modules (or Example \ref{nclosed} below). However, if $\mathcal A$ is a deconstructible class and $\mathcal B$ is the class of all modules isomorphic to direct summands of modules in $\mathcal A$, then $\mathcal B$ is also deconstructible (cf.\ \cite[\S 7.2]{GT}, which as a particular case gives the classic Kaplansky theorem on the structure of projective modules). So it is not much of a restriction that we will consider here deconstructible classes of modules closed under direct summands. 

\smallskip

The proof of our first main theorem makes use of the following result going back to Hill (see e.g.\ \cite[Theorem 7.10]{GT}):
 
\begin{lemma} \label{Hill}
Let $\kappa$ be a regular infinite cardinal. Let $\mathcal S$ be a class of $< \kappa$-presented modules and $M$ a module possessing an $\mathcal S$-filtration $(M_\alpha \mid \alpha\leq\sigma)$. Then there is a family $\mathcal F$ of submodules of $M$ such that:
\begin{enumerate}
\item $M_\alpha \in \mathcal F$ for all $\alpha\leq\sigma$.
\item $\mathcal F$ is closed under arbitrary sums and intersections.
\item For each $N,P \in \mathcal F$ such that $N \subseteq P$, the module $P/N$ is $\mathcal S$-filtered.
\item For each $N \in \mathcal F$ and a subset $X \subseteq M$ of cardinality $< \kappa$, there is $P \in \mathcal F$ such that $N \cup X \subseteq P$ and $P/N$ is $< \kappa$-presented.\qed
\end{enumerate}
\end{lemma}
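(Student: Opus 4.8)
The plan is to build $\mathcal F$ out of ``closed'' index sets. Fix the given $\mathcal S$-filtration $(M_\alpha \mid \alpha \le \sigma)$, so that each $M_{\alpha+1}/M_\alpha \cong C_\alpha \in \mathcal S$ is $<\kappa$-presented. For every $\alpha < \sigma$ I would choose a subset $X_\alpha \subseteq M_{\alpha+1}$ of cardinality $<\kappa$ whose image generates $M_{\alpha+1}/M_\alpha$, together with a generating set of size $<\kappa$ for the relations among these generators; evaluating those relations inside $M_{\alpha+1}$ yields a subset $Y_\alpha \subseteq M_\alpha$ with $|Y_\alpha|<\kappa$, $M_{\alpha+1}=M_\alpha+\langle X_\alpha\rangle$, and $\langle X_\alpha\rangle \cap M_\alpha = \langle Y_\alpha\rangle$. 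Writing $M_S := \sum_{\alpha\in S}\langle X_\alpha\rangle$ for $S\subseteq\sigma$, each element of $Y_\alpha$ has finite support in $M_\alpha = M_{[0,\alpha)}$, so I may fix $g(\alpha)\subseteq\alpha$ with $|g(\alpha)|<\kappa$ and $Y_\alpha\subseteq M_{g(\alpha)}$. Call $S\subseteq\sigma$ \emph{closed} if $g(\alpha)\subseteq S$ for all $\alpha\in S$, and set $\mathcal F=\{M_S \mid S \text{ closed}\}$. Since $M_\alpha=M_{[0,\alpha)}$ and every initial segment is closed, this gives~(1) at once.

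Properties (2) and (3) then split into an easy and a hard half. The closed subsets of $\sigma$ form a complete sublattice of the power set, and $M_{\bigcup_i S_i}=\sum_i M_{S_i}$ yields closure under sums immediately. For~(3), given closed $S\subseteq T$ I would enumerate $T\setminus S=\{\alpha_\xi \mid \xi<\tau\}$ increasingly and check that each $S\cup\{\alpha_\eta \mid \eta<\xi\}$ is again closed, so that $(M_{S\cup\{\alpha_\eta \mid \eta<\xi\}} \mid \xi\le\tau)$ is a continuous chain from $M_S$ to $M_T$ whose $\xi$-th factor is $\langle X_{\alpha_\xi}\rangle/(\langle X_{\alpha_\xi}\rangle\cap M_{S\cup\{\alpha_\eta \mid \eta<\xi\}})$.

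The technical heart, and the step I expect to be the main obstacle, is the \emph{non-interference} statement underlying both the intersection half of~(2) and the identification of those factors: for closed $S'$ with $\alpha\notin S'$ and $g(\alpha)\subseteq S'$ one needs $\langle X_\alpha\rangle\cap M_{S'}=\langle X_\alpha\rangle\cap M_\alpha=\langle Y_\alpha\rangle$, equivalently $\langle X_\alpha\rangle\cap M_{S'}\subseteq M_\alpha$. I would package this as the single formula $M_S\cap M_\gamma = M_{S\cap[0,\gamma)}$ for every closed $S$ and every $\gamma\le\sigma$, proved by induction on $\gamma$; limits are immediate, and the successor case with $\beta\in S$ is a direct modularity computation. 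The remaining successor case ($\beta\notin S$) is exactly the interference problem: given $z\in M_S\cap M_{\beta+1}$ one writes $z$ with finite support in $S$ and argues by a secondary descending induction on the largest index $\delta^{*}$ occurring. When $\delta^{*}>\beta$ the top component lies in $\langle X_{\delta^{*}}\rangle\cap M_{\delta^{*}}=\langle Y_{\delta^{*}}\rangle\subseteq M_{g(\delta^{*})}$, and closedness of $S$ forces $g(\delta^{*})\subseteq S\cap[0,\delta^{*})$; thus the top component may be rewritten with strictly smaller indices, lowering $\delta^{*}$ until $z\in M_\beta$. This is where the construction of $g$ pays off, and it is the only place that genuinely uses well-foundedness of the ordinals. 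Granting the formula, $M_{S\cap T}=M_S\cap M_T$ follows, and the arbitrary-intersection case of~(2) bootstraps from it by an easy induction on the least filtration level of an element (a nonzero $z\in\bigcap_i M_{S_i}$ of minimal level $\beta+1$ forces $\beta\in\bigcap_i S_i$, after which one reduces to lower level); the factors in~(3) are then the $C_{\alpha_\xi}\in\mathcal S$.

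Finally, for~(4), given $N=M_S$ and $X\subseteq M$ with $|X|<\kappa$, I would collect the finite supports into $T_0=S\cup\bigcup_{x\in X}\mathrm{supp}(x)$ and close off under $g$ by iterating $T_{n+1}=T_n\cup\bigcup_{\alpha\in T_n}g(\alpha)$, taking $T=\bigcup_{n<\omega}T_n$. Regularity of $\kappa$ keeps $|T\setminus S|<\kappa$ (for $\kappa=\aleph_0$ one invokes K\"onig's lemma on the finitely branching, well-founded tree of $g$-descent paths to see the closure stays finite), and $T$ is closed, so $P:=M_T\in\mathcal F$ satisfies $N\cup X\subseteq P$. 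By~(3), $P/N=M_T/M_S$ is $\mathcal S$-filtered by the $<\kappa$ many $<\kappa$-presented factors indexed by $T\setminus S$, and since $\kappa$ is regular such a transfinite extension is again $<\kappa$-presented, which completes~(4).
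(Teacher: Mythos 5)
Your proposal is correct and follows essentially the same route as the proof the paper is implicitly relying on (the lemma is stated without proof here, quoted from G\"obel--Trlifaj, Theorem 7.10): the same choice of generator sets $X_\alpha$ and relation witnesses $Y_\alpha$ with $\langle X_\alpha\rangle\cap M_\alpha=\langle Y_\alpha\rangle$, the family $\mathcal F=\{M_S\mid S \text{ closed}\}$, the key non-interference formula $M_S\cap M_\gamma=M_{S\cap[0,\gamma)}$ proved by the descending rewriting argument on the largest support index, and the $\omega$-step closure for property (4). Your only deviations are cosmetic and sound: you define closedness via a fixed choice function $g$ (which makes closure of the index sets under intersections automatic, at the price of a possibly smaller $\mathcal F$) instead of the intrinsic condition $\langle X_\alpha\rangle\cap M_\alpha\subseteq M_{S\cap\alpha}$, and your K\"onig's-lemma argument for $\kappa=\aleph_0$ (finitely branching, well-founded tree of $g$-descents) is exactly the refinement needed to cover the countable case of the standard proof.
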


Surprisingly, the key ingredient of our proof is a construction of tree modules (denoted by $L$ below) that has already proven instrumental for the solution of Auslander's problem on existence of right almost split maps in \cite{Sa}:

\begin{lemma} \label{l:3.4}\cite[Lemma~3.4]{Sa} Assume that $\mathcal C = (C_\alpha,f_{\beta\alpha}\colon C_\alpha\to C_\beta \mid \alpha\leq\beta<\mu)$ is a~well-ordered directed system of modules indexed by an infinite regular cardinal $\mu$. Let $\lambda$ be a~cardinal such that $\lambda^\mu > \lambda^{<\mu} =\lambda\geq |R|$ and that $\mathcal C$ consists of $\lambda$-presented modules. Put $N = \bigoplus_{\alpha<\mu}C_\alpha$ and $C = \varinjlim \mathcal C$. Then the following holds:
\begin{enumerate}
	\item There is a~short exact sequence $0 \longrightarrow D \overset{\subseteq}\longrightarrow L \overset{g}\longrightarrow C^{(\lambda^\mu)} \longrightarrow 0$ where $|D|\leq\lambda$.
	\item There is an epimorphism $\pi\colon N^{(\lambda^\mu)} \to L$ where $L$ is as in $(i)$. Moreover, for each \emph{finite} $S\subset \lambda^\mu$, the module $\pi(N^{(S)})$ is a~direct summand in $L$ and it decomposes as the direct sum of modules isomorphic to modules in $\{C_\alpha\mid \alpha<\mu\}$. \hfill\qed
\end{enumerate}
\end{lemma}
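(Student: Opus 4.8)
The plan is to realise $L$ as an explicit quotient of $N^{(\lambda^\mu)}$ modelled on the tree $T=\bigcup_{\alpha<\mu}{}^{\alpha}\lambda$ of all sequences of length $<\mu$ with values in $\lambda$, whose set of cofinal branches is $B={}^{\mu}\lambda$. The hypothesis $\lambda^{<\mu}=\lambda$ gives $|T|=\lambda$, while $|B|=\lambda^\mu$, so I index the $\lambda^\mu$ copies in $N^{(\lambda^\mu)}$ and $C^{(\lambda^\mu)}$ by the branches $b\in B$. Since each $C_\alpha$ is $\lambda$-presented and $\mu\le\lambda$, the module $N=\bigoplus_{\alpha<\mu}C_\alpha$ is $\lambda$-presented and the canonical epimorphism $\mathrm{can}\colon N\to C$ onto the colimit has kernel $D_0$ generated by the ``telescope'' elements $\iota_\alpha(x)-\iota_{\alpha+1}f_{\alpha+1,\alpha}(x)$ ($x\in C_\alpha$); after replacing $\mathcal C$ by its continuous refinement (harmless, as $<\mu$-colimits of $\lambda$-presented modules remain $\lambda$-presented under $\lambda^{<\mu}=\lambda\ge|R|$) these generators suffice, and $|D_0|\le\lambda$.

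First I would define $\pi\colon N^{(B)}\to L$ by setting $L=N^{(B)}/K$, where $K$ is generated by the differences $r_\alpha^{b}(x)-r_\alpha^{b'}(x)$ of the level-$\alpha$ relations $r_\alpha^{b}(x)=\iota_\alpha^{b}(x)-\iota_{\alpha+1}^{b}f_{\alpha+1,\alpha}(x)$, taken over all $x\in C_\alpha$, $\alpha<\mu$, and all pairs of branches $b,b'$ with $b\restriction(\alpha+1)=b'\restriction(\alpha+1)$. The effect of this gluing is to force the level-$\alpha$ relation to depend only on the tree node $b\restriction(\alpha+1)$ rather than on the entire branch. Because each $r_\alpha^{b}(x)$ lies in $D_0^{(b)}$, we get $K\subseteq\bigoplus_{b\in B}D_0^{(b)}=\ker(\mathrm{can}^{(B)})$, so $\mathrm{can}^{(B)}\colon N^{(B)}\to C^{(B)}$ factors as $g\circ\pi$ for a unique $g\colon L\to C^{(\lambda^\mu)}$, the epimorphism of part (i).

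The next step is to identify $D=\ker g=\bigl(\bigoplus_{b}D_0^{(b)}\bigr)/K$ and bound its size. Modulo $K$ the class of $r_\alpha^{b}(x)$ depends only on the node $t=b\restriction(\alpha+1)\in {}^{\alpha+1}\lambda$, so $D$ is generated by the classes $[r_\alpha^{t}(x)]$ indexed by a node $t\in T$ and an element $x\in C_{\ell(t)-1}$; as $|T|=\lambda$ and each $|C_\alpha|\le\lambda$, this is a generating set of size $\lambda\cdot\lambda=\lambda$, hence $|D|\le\lambda$, proving (i). For (ii), surjectivity of $\pi$ is built in, so it remains to analyse $\pi(N^{(S)})$ for finite $S\subset B$. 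Any two distinct branches in $S$ split at some level $<\mu$, and since $S$ is finite there is a common bound $\gamma^*<\mu$ on these splitting levels; consequently the generators of $K$ supported inside $N^{(S)}$ are the gluings $r_\alpha^{b}-r_\alpha^{b'}$ with $b,b'\in S$ and $\alpha+1\le\gamma^*$. I would then compute $\pi(N^{(S)})\cong N^{(S)}/(N^{(S)}\cap K)$ directly: identifying, for each node $t$ of length $\le\gamma^*$ below which several branches of $S$ still agree, the corresponding copies of $C_{\ell(t)}$, one reads off a decomposition of $\pi(N^{(S)})$ as a finite direct sum of modules each isomorphic to some $C_\alpha$, and one exhibits a retraction of $L$ onto it, using that the relations connecting $S$ to $B\setminus S$ involve branches already split off below level $\gamma^*$.

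I expect the main obstacle to be part (ii): proving simultaneously that $\pi(N^{(S)})$ is a direct summand of $L$ and that it decomposes into copies of the $C_\alpha$. Both require careful bookkeeping of which relations survive in $N^{(S)}\cap K$ (controlling possible cancellation among generators whose supports include branches outside $S$) and a verification that the complementary relations split off; the finiteness of $S$ together with the regularity of $\mu$, which keeps all splitting levels bounded strictly below $\mu$, is exactly what makes the retraction well defined. The bound $|D|\le\lambda$ in (i), by contrast, reduces cleanly to the counting $|T|=\lambda^{<\mu}=\lambda$ once the node-localisation of the relations has been established.
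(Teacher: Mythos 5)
This statement is quoted from \cite[Lemma~3.4]{Sa} and not reproved in the paper; the actual construction there realises $L$ inside a product $\prod_{t\in T}C_{\ell(t)}$ over the tree $T={}^{<\mu}\lambda$, generated by ``branch elements'' whose coordinate at the node $b\restriction\beta$ is $f_{\beta\alpha}(x)$ for \emph{all} $\beta\geq\alpha$, with $D$ the $<\!\mu$-supported part (of size $\leq\lambda^{<\mu}=\lambda$). Your quotient presentation $L=N^{(B)}/K$ captures the same idea, and for $\mu=\omega$ it is in fact an equivalent presentation of the tree module. But for uncountable regular $\mu$ your key step fails: the kernel of $N=\bigoplus_{\alpha<\mu}C_\alpha\to C$ is \emph{not} generated by the successor telescope elements, and the ``continuous refinement'' does not repair this. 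The obstruction has nothing to do with continuity of the system: no finite composition of successor maps crosses a limit ordinal, so the quotient of $N$ by the successor relations is $\bigoplus_{\delta}\varinjlim_{n<\omega}C_{\delta+n}$, the sum taken over $\delta=0$ and limit $\delta<\mu$. (Concretely, for the constant system $C_\alpha=\mathbb{Z}$ with identity maps and $\mu=\omega_1$, the kernel of the summation map $\mathbb{Z}^{(\omega_1)}\to\mathbb{Z}$ is not generated by the elements $e_\alpha-e_{\alpha+1}$.) Consequently $D=\ker g$ contains, for every branch $b$, every limit $\delta$ and $\alpha<\delta$, the class of $\iota^b_\alpha(x)-\iota^b_\delta f_{\delta\alpha}(x)$; modulo your $K$ such a class can only be pushed up finitely many levels, so it is \emph{not} identified across branches agreeing up to $\delta+1$, the node-localisation collapses, and $|D|$ can be as large as $\lambda^\mu$. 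The repair is to glue \emph{all} telescope relations $\iota^b_\alpha(x)-\iota^b_\beta f_{\beta\alpha}(x)$ ($\alpha\leq\beta<\mu$) at the node $b\restriction(\beta+1)$ --- which is exactly what the branch elements of \cite{Sa} encode. Note also that even if the refinement trick worked, it would be illegitimate here: it inserts the modules $\varinjlim_{\alpha<\delta}C_\alpha$ into the family, so clause (ii) would only yield summands isomorphic to members of the \emph{refined} family, destroying precisely the feature that Theorem~\ref{t:closedl} uses ($L_S\in\mathcal A$ because each $C_\alpha\in\mathcal A$).

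Part (ii) is also left open rather than proved: you correctly identify the computation of $N^{(S)}\cap K$ and the retraction as the crux, but the justification you offer is false --- branches outside $S$ are \emph{not} ``already split off below level $\gamma^*$''; for every $\beta<\mu$ there are $\lambda^\mu$ branches outside $S$ agreeing with a branch of $S$ beyond $\beta$, since $\gamma^*$ bounds only the splitting levels \emph{within} $S$. In the product realisation the retraction is nevertheless easy: project onto the coordinates at nodes lying on branches of $S$; a generator $x_{w,\alpha}(x)$ with $w\notin S$ maps to the head segment $x_{v,\alpha}(x)-x_{v,\delta}(f_{\delta\alpha}(x))\in L_S$, where $\delta<\mu$ is the (finitely attained, by $|S|<\aleph_0$) level at which $w$ separates from $S$ and $v\in S$ agrees with $w$ up to $\delta$. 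Finally, a small but real misstatement: $\pi(N^{(S)})$ is not a finite direct sum --- already for $S$ a singleton it is isomorphic to $N=\bigoplus_{\alpha<\mu}C_\alpha$; the correct decomposition has the shape $\bigoplus_{t}\bigl(\bigoplus_{\alpha<\gamma^*}C_\alpha\bigr)\oplus\bigoplus_{v\in S}\bigl(\bigoplus_{\gamma^*\leq\alpha<\mu}C_\alpha\bigr)$, a head part indexed by the finitely many nodes $v\restriction\gamma^*$ plus pairwise disjoint tails, which is all the lemma asserts.
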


Our main theorem now follows. Notice the vital role the assumption that $\preceq$ refines direct summands plays in the proof.

\begin{theorem}\label{t:closedl} Let $\mathcal A=\Filt{\mathcal S}$ where $\mathcal S$ is a~set of modules. Assume that $\mathcal A$ is closed under direct summands. Let $\preceq$ be a~partial order on $\mathcal A$ such that $\preceq$ refines direct summands. Moreover, assume that conditions (A1)(i) and (A1)(iii) hold for the pair $(\mathcal A, \preceq)$. Then the class $\mathcal A$ is closed under direct limits.
\end{theorem}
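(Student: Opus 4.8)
The plan is to reduce closure under direct limits to closure under certain colimits over regular cardinals, and then use the tree module construction of Lemma \ref{l:3.4} to derive a contradiction from the failure of closure. Recall that an arbitrary direct limit can be rewritten as a direct limit over a well-ordered (directed) system indexed by a regular cardinal $\mu$: any direct system admits a cofinal well-ordered subsystem after refining, and standard reductions let us assume the index set is a regular cardinal $\mu$ and that all the modules $C_\alpha$ lie in $\mathcal A$. So it suffices to show that for every well-ordered directed system $\mathcal C = (C_\alpha, f_{\beta\alpha} \mid \alpha \le \beta < \mu)$ of modules in $\mathcal A$, the colimit $C = \varinjlim \mathcal C$ again lies in $\mathcal A$.

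The strategy I would pursue is a proof by contradiction. Suppose $C \notin \mathcal A$. Since $\mathcal A = \Filt{\mathcal S}$ with $\mathcal S$ a set, there is a cardinal $\lambda \ge |R|$ bounding the presentation size of every module in $\mathcal S$, and we may arrange (by a cardinal-arithmetic choice, e.g.\ taking $\lambda$ with $\lambda^{<\mu} = \lambda < \lambda^\mu$, which exists for suitable $\mu$) that the hypotheses of Lemma \ref{l:3.4} are met: the $C_\alpha$ are $\lambda$-presented and $\lambda^\mu > \lambda^{<\mu} = \lambda \ge |R|$. Here I would need to be careful that the cardinal-arithmetic side condition can genuinely be secured; this is a point where one typically invokes that for any $\mu$ there are arbitrarily large $\lambda$ with $\lambda^{<\mu} = \lambda$ (e.g.\ $\lambda = \beth$-type or $\lambda = \mu^{+\omega}$ style choices), and then $\lambda^\mu > \lambda$ holds by König. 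Lemma \ref{l:3.4} then produces the tree module $L$ sitting in a short exact sequence $0 \to D \to L \to C^{(\lambda^\mu)} \to 0$ with $|D| \le \lambda$, together with an epimorphism $\pi\colon N^{(\lambda^\mu)} \to L$, where $N = \bigoplus_{\alpha<\mu} C_\alpha \in \mathcal A$ (as $\mathcal A$ is closed under direct sums).

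The heart of the argument is to show, using the two additional hypotheses, that $L \in \mathcal A$, and then to extract from $L \in \mathcal A$ a copy of $C$ as a direct summand, forcing $C \in \mathcal A$ by closure under direct summands. To see $L \in \mathcal A$, I would build a continuous $\preceq$-increasing chain whose union is $L$: using the finite-support summand property in Lemma \ref{l:3.4}(ii), each $\pi(N^{(S)})$ for finite $S$ is a direct summand of $L$ that decomposes as a direct sum of the $C_\alpha$'s, hence lies in $\mathcal A$; taking an exhausting continuous chain of such summands (indexed so as to cover $\lambda^\mu$) and invoking that $\preceq$ refines direct summands to make each inclusion strong, conditions (A1)(i) and (A1)(iii) would deliver $L \in \mathcal A$ as the union, together with the strongness of each stage inside $L$. (Here (A1)(iii) is what lets the chain's union be a strong submodule, and hence remain controlled; (A1)(i) gives membership.) Once $L \in \mathcal A$, the surjection $L \to C^{(\lambda^\mu)}$ with small kernel $D$, combined with the cardinality gap $\lambda^\mu > \lambda$, should let me split off at least one copy of $C$: since $|D| \le \lambda$ is strictly smaller than the number $\lambda^\mu$ of copies of $C$, a counting/pushout argument isolates a summand of $L$ isomorphic to $C$ (this is the classical mechanism by which tree modules detect that a colimit is a summand). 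Then closure of $\mathcal A$ under direct summands yields $C \in \mathcal A$, contradicting our assumption.

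The step I expect to be the main obstacle is precisely the passage from $L \in \mathcal A$ to $C$ being a direct summand of $L$, and simultaneously the verification that $L \in \mathcal A$ via the chain of finite-support summands. The delicate issue is organizing the summands $\pi(N^{(S)})$ into a genuinely continuous $\preceq$-increasing chain exhausting $L$ while controlling the behavior at limit stages—one must ensure the union at limits is again a strong submodule in $\mathcal A$, which is exactly where (A1)(iii) and the refinement of direct summands by $\preceq$ are indispensable (and why, as the authors remark, the hypothesis on $\preceq$ is vital). The cardinality bookkeeping that turns the small kernel $|D| \le \lambda$ against the large number $\lambda^\mu$ of copies of $C$ is technically the crux: I would route it through the observation that any homomorphism from $L$ factoring the projection to a single copy of $C$ cannot be ``absorbed'' by $D$, so that $C$ embeds as a retract. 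Making this retraction precise, rather than merely embedding $C$, is where I anticipate the real work lies.
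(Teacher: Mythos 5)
Your overall architecture matches the paper's (reduction to well-ordered systems over a regular $\mu$, the tree module $L$ of Lemma~\ref{l:3.4}, membership $L\in\mathcal A$ via the chain axioms), but the endgame you propose is a genuine gap. You want to pass from $L\in\mathcal A$ to $C\in\mathcal A$ by isolating a retract of $L$ isomorphic to $C$, arguing that the small kernel $|D|\le\lambda$ against the $\lambda^\mu$ copies of $C$ forces a copy of $C$ to split off \emph{in $L$ itself}. Nothing in Lemma~\ref{l:3.4} yields such a splitting: smallness of $D$ gives no retraction of $L$ onto any copy of $C$, and the sequence $0\to D\to L\to C^{(\lambda^\mu)}\to 0$ need not even partially split. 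The paper instead passes to a \emph{quotient}: fix an $\mathcal S$-filtration of $L$ (this is precisely where the hypothesis $\mathcal A=\Filt{\mathcal S}$, rather than a mere Kaplansky-type property, is consumed) and apply Hill's Lemma~\ref{Hill} with $\kappa=\lambda^+$ to find $P\in\mathcal F$ with $D\subseteq P$, $|P|\le\lambda$ and $L/P\in\mathcal A$. Then $L/P\cong C^{(\lambda^\mu)}/g(P)$, and since $|g(P)|\le\lambda<\lambda^\mu$, the image $g(P)$ lies inside a subsum $C^{(S)}$ with $|S|=\lambda$, whence $L/P\cong C^{(S)}/g(P)\oplus C^{(\lambda^\mu\setminus S)}$; now $C$ is a direct summand of a module in $\mathcal A$ and closure under direct summands finishes the proof. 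That the last step must go through deconstructibility is corroborated by Example~\ref{FM}: for the class $\mathcal F\mathcal M$ over a non-right perfect ring all the other ingredients of your plan are available, yet the conclusion fails, so no argument splitting $C$ off $L$ directly can work at this level of generality.

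There is a second, smaller but real, hole in your verification that $L\in\mathcal A$. A continuous chain built from the finite-support summands $\pi(N^{(S)})$ must pass through limit stages, after which its members have infinite support and are no longer direct summands of one another or of $L$; so ``$\preceq$ refines direct summands'' no longer supplies the strongness needed at successor steps of the chain, nor the hypotheses of (A1)(i) at later limits. The paper repairs this with a double induction on $|S|$: one proves \emph{simultaneously} that $L_S:=\pi(N^{(S)})\in\mathcal A$ and that $L_{S'}\preceq L_S$ for \emph{every} $S'\subseteq S$, the latter via a least-cardinality counterexample argument that writes $S'$ as a continuous union of smaller sets and invokes (A1)(iii). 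This secondary claim, absent from your sketch, is exactly what makes the exhausting chains $\preceq$-increasing beyond the finite level. Finally, a point of cardinal arithmetic: K\H{o}nig's theorem gives $\lambda^{\cf{\lambda}}>\lambda$, while $\lambda^{<\mu}=\lambda$ forces $\cf{\lambda}\ge\mu$, so one needs $\cf{\lambda}=\mu$ exactly --- e.g.\ a sufficiently large strong limit singular $\lambda$ of cofinality $\mu$, as the paper takes (citing \cite{J}); your suggested choice $\lambda=\mu^{+\omega}$ has cofinality $\omega$ and violates $\lambda^{<\mu}=\lambda$ whenever $\mu>\omega$.
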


\begin{proof} It is enough to show that $\mathcal A$ is closed under direct limits of well-ordered directed systems of the form $\mathcal C = (C_\alpha,f_{\beta\alpha}\colon C_\alpha\to C_\beta \mid \alpha\leq\beta<\mu)$ where $\mu$ is an infinite regular cardinal. Assume that we are given any such directed system $\mathcal C$ with $C_\alpha\in\mathcal A$ for each $\alpha<\mu$.

We use Lemma~\ref{l:3.4} for this $\mathcal C$ and for any~$\lambda$ satisfying the assumption of the lemma and such that all modules in $\mathcal S$ are $\lambda$-presented; any large enough strong limit singular cardinal $\lambda$ with cofinality $\mu$ suffices here, cf.\ \cite[Theorem~5.20(iii)]{J}. We claim that the module $L$ from the lemma belongs to $\mathcal A$.

To show it, let us first denote, for each $S\subseteq \lambda^\mu$, by $L_S$ the module $\pi(N^{(S)})$. In particular, $L = L_{\lambda^\mu}$. We prove that $L_S\in\mathcal A$ by induction on the cardinality of $S$. At the same time, we show that, for each $S^\prime\subseteq S$, $L_{S^\prime}\preceq L_S$.

If $S$ is finite, then $L_S\in\mathcal A$ by part $(2)$ from Lemma~\ref{l:3.4}, and $L_{S^\prime}\preceq L_S$ holds since $\preceq$ refines direct summands and $L_{S^\prime}$ is a~direct summand of $L_S$.

Assume now that $S\subseteq \lambda^\mu$ is infinite and that $L_U\preceq L_V$ holds whenever $U\subseteq V$ and $|V|<|S|$. We write $S$ as the union of a~continuous increasing chain $(S_\alpha\mid \alpha<\cf{|S|})$ of subsets of $S$ of cardinality strictly less than $|S|$. Since $L_{S_\alpha}\preceq L_{S_{\alpha+1}}$ holds for each $\alpha$ by the inductive assumption, we get that $L_S\in\mathcal A$ by (A1)(i).

For the sake of contradiction, suppose that there is $S^\prime\subseteq S$ such that $L_{S^\prime}\npreceq L_S$. Assume, moreover, that $S^\prime$ has the least possible cardinality. Then $S^\prime$ is infinite since otherwise $L_{S^\prime}$ is a~direct summand in $L_S$, and so $L_{S^\prime}\preceq L_S$. Again, we write $S^\prime$ as the union of a~continuous increasing chain $(S^\prime_\alpha\mid \alpha<\cf{|S^\prime|})$ of subsets of $S^\prime$ of cardinality strictly less than $|S^\prime|$. By the inductive assumption, this chain is $\preceq$-increasing. Using the minimality of $|S^\prime|$, we have $L_{S^\prime_\alpha}\preceq L_S$ for each $\alpha$. It follows that $L_{S^\prime}\preceq L_S$ holds true by (A1)(iii).

We have shown that $L\in\mathcal A$. Now put $\kappa = \lambda^+$ and fix a~filtration of $L$ consisting of modules isomorphic to the ones from $\mathcal S$. Using Lemma~\ref{Hill} for our $\mathcal S$ and $\kappa$, we obtain a~submodule $P$ of $L$ such that $D\subseteq P$, $|P|\leq\lambda$ and $L/P\in\mathcal A$. In particular, $L/P\cong \frac{L/D}{P/D} \cong C^{(\lambda^\mu)}/g(P)\in\mathcal A$. However, $|g(P)|\leq\lambda<\lambda^\mu$ implies that there is an $S\subset\lambda^\mu$ of cardinality $\lambda$ such that $C^{(\lambda^\mu)}/g(P)\cong C^{(S)}/g(P)\oplus C^{(\lambda^\mu\setminus S)}\in\mathcal A$. It follows that $C = \varinjlim\mathcal C\in\mathcal A$ since $\mathcal A$ is closed under direct summands.
\end{proof}

\begin{remark} Notice that the assumption that $\preceq$ refines direct summands is necessary in Theorem \ref{t:closedl}: If $\mathcal A$ is any deconstructible class of modules closed under direct summands, but not closed under direct limits (such as the class of all projective modules over a non-right perfect ring), then $(\mathcal A,\preceq)$ trivially satisfies conditions (A1) and (A2) in the case when $A \preceq B$ stands for $A = B$. However, if we assume that $(\mathcal A, \preceq)$ is an AEC, then the assumption that $\preceq$ refines direct summands is not needed (at least in some models of ZFC), as is shown in the Appendix.

Since each class of modules closed under countable direct limits is necessarily closed under direct summands, also the assumption of $\mathcal A$ being closed under direct summands is necessary in Theorem \ref{t:closedl}. This assumption may fail even if $\mathcal A$ fits in an AEC as shown by the following example.
\end{remark}

\begin{example}\label{nclosed} Let $S$ be a ring, and $(\mathcal A,\preceq)$ be any AEC of right $S$-modules such that $\mathcal A$ is deconstructible (e.g., any of the AECs from Theorem \ref{roots} will do). 

Let $R = S \boxplus S$ be the ring direct product of two copies of $S$. Then $R = eR \oplus fR$, where $e = (1,0)$ and $f = (0,1)$ are orthogonal central idempotents of $R$, and there are ring isomorphisms $eR \cong S \cong fR$. 

Let $\mathcal A ^\prime = \{ N \in \rmod R \mid \, \exists M \in \mathcal A : \, M = Ne \,\&\,  M = Nf \}$. That is, $\mathcal A ^\prime$ consists of the pairs $(M,M)$ where $M \in \mathcal A$. 

For $N, P \in \mathcal A ^\prime$, let $N \preceq ^\prime P$, iff $Ne \preceq Pe$. Then $(\mathcal A ^\prime,\preceq ^\prime)$ is an AEC in $\rmod R$. However, $\mathcal A ^\prime$ is not closed under direct summands: if $0 \neq N \in \mathcal A ^\prime$, then $N$ decomposes as $N = Ne \oplus Nf$ in $\rmod R$, but neither of the right $R$-modules $Ne$ and $Nf$ belongs to $\mathcal A ^\prime$.      
\end{example}

Using Lemma \ref{Hill}, it is easy to see that each deconstructible class of modules $\mathcal A$ is Kaplansky. Here, $\mathcal A$ is a \emph{Kaplansky class} in case $\mathcal A$ is \emph{$\kappa$-Kaplansky} for some infinite cardinal $\kappa$. The latter means that for each $A \in \mathcal A$ and each subset $X$ of $A$ of cardinality $\leq \kappa$, there exists a $\leq \kappa$-presented module $B \in \mathcal A$ such that $X \subseteq B \subseteq A$ and $B/A \in \mathcal A$. Conversely, a~Kaplansky class $\mathcal A$ \emph{closed under direct limits} is deconstructible (see \cite[Theorem 10.3]{GT}), but not in general.

In the last paragraph of the proof of Theorem~\ref{t:closedl}, we used Lemma~\ref{Hill} to obtain a~suitable submodule $P$ of $L$. This could have been alternatively done assuming that $\mathcal A = \Filt{\mathcal A}$ and $\mathcal A$ be $\lambda$-Kaplansky instead of $\mathcal A = \Filt{\mathcal S}$. The following example, however, shows that the variant of Theorem \ref{t:closedl} where {\lq}deconstructible{\rq} is replaced by the weaker term {\lq}Kaplansky{\rq}, fails in general. 

\begin{example}\label{FM} Let $R$ be a non-right perfect ring and $\mathcal F \mathcal M$ be the class of all flat Mittag-Leffler (= $\aleph_1$-projective) modules. Then $\mathcal F \mathcal M$ is $\kappa$-Kaplansky for $\kappa = 2^{|R|}$ \cite{SaT}, but $\mathcal F \mathcal M$ is not deconstructible: for each $\lambda \geq \aleph_0$, there exists $M_\lambda \in \mathcal F \mathcal M$ such that $M_\lambda$ is $\lambda^+$-presented, but $M_\lambda \notin \Filt {\mathcal F \mathcal M ^{< \lambda ^+}}$. A proof of the latter fact appears in \cite{HT} (see also \cite[\S 10.2]{GT}).
 
As pointed out by Mazari-Armida, $(\mathcal F \mathcal M,\subseteq^*)$ is an AEC, where $\subseteq^*$ denotes the relation of being a pure submodule. Indeed, condition (A1)(i) holds because the modules $M \in \mathcal F \mathcal M$ are characterized by the property that each finite subset of $M$ is contained in a countably generated pure and projective submodule of $M$, \cite[Corollary 3.19]{GT}. Conditions (A1)(ii)--(iii), (A2) and (A3) follow from well-known properties of purity in $\rmod R$. 

So $(\mathcal F \mathcal M,\subseteq^*)$ is an AEC, $\mathcal F \mathcal M$ is a Kaplansky class (even $\aleph_1$-Kaplansky when $|R| = \aleph_0$ and CH holds). However, the class $(\mathcal F \mathcal M) ^{< \aleph_1} = \mathcal P _0 ^{< \aleph_1}$ is not closed under countable direct limits. Indeed, since $R$ is not right perfect, there exist countably presented flat modules that are not projective. 
\end{example}

\begin{remark} $(\mathcal F \mathcal M,\subseteq^*)$ is an AEC, but $\mathcal F \mathcal M$ is not closed under direct limits when $R$ is not right perfect. So the proof of Theorem \ref{t:closedl} in this particular setting yields an alternative proof of non-deconstructibility of the class $\mathcal F \mathcal M$ to the proof presented in \cite{HT}.
\end{remark}  

Finally, let us note that under the assumption of Vop\v{e}nka’s Principle, the converse of Theorem~\ref{t:closedl} holds in the following form: Each class of modules closed under finite direct sums and direct limits is deconstructible (cf.\ Remark~4 and Corollary~3.4 in \cite{PPT}).

\section{Shelah's categoricity conjecture for deconstructible classes of modules}\label{Scategoricity}

The goal of this section is to prove a version of Shelah's Categoricity Conjecture for all deconstructible classes of modules, with an improved bound of $(2^\kappa)^+$ in place of $\beth_{(2^\kappa)^+}$. That is, we will prove the following

\begin{theorem}\label{maindec} Let $R$ be a ring and $\kappa \geq \card R + \aleph_0$. Let $\mathcal D$ be a $\kappa^+$-deconstructible class of modules. Assume that $\mathcal D$ is $\lambda$-categorical for some $\lambda \geq (2^\kappa)^+$. Then $\mathcal D$ is $\lambda$-categorical for each $\lambda \geq (2^\kappa)^+$.
\end{theorem}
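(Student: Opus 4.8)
The plan is to reduce $\lambda$-categoricity, for every $\lambda\ge(2^\kappa)^+$, to a single \emph{cardinal-independent} structural condition on the set of building blocks, so that the up/down transfer between cardinals is never performed directly. Write $\mathcal T=\mathcal D^{<\kappa^+}$ for the class of $\le\kappa$-presented members of $\mathcal D$. Since such a module is a cokernel of a map between free modules on $\le\kappa$ generators and $\kappa\ge\card R$, there are at most $|R|^\kappa\le 2^\kappa$ isomorphism types in $\mathcal T$, and I fix a set of representatives. Being deconstructible, $\mathcal D=\Filt{\mathcal T}$ is closed under transfinite extensions and arbitrary direct sums, so for every nonzero $S\in\mathcal T$ and every infinite $\lambda$ the module $S^{(\lambda)}$ lies in $\mathcal D$ and has cardinality $\lambda$ (as $\card S\le\kappa<\lambda$). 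This already yields a model in each $\lambda\ge(2^\kappa)^+$, so the whole problem is \emph{uniqueness}.

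First I would record the cheap consequences of the hypothesis. If $\mathcal D$ is $\lambda_0$-categorical with unique model $M_{\lambda_0}$, then $S^{(\lambda_0)}\cong(S')^{(\lambda_0)}\cong M_{\lambda_0}$ for all nonzero $S,S'\in\mathcal T$, and taking direct sums of models gives $M_{\lambda_0}\oplus X\cong M_{\lambda_0}$ for every $X\in\mathcal D$ of cardinality $\le\lambda_0$. Grouping $\lambda=\lambda_0\cdot\lambda$ propagates $S^{(\lambda)}\cong(S')^{(\lambda)}$ to all $\lambda\ge\lambda_0$. The genuine content, and the reason a purely arithmetic argument cannot succeed, is twofold: transfer \emph{downward} to $(2^\kappa)^+\le\lambda<\lambda_0$, and uniqueness among \emph{all} modules of a given size, not only those of the form $S^{(\lambda)}$. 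Indeed a deconstructible class contains large indecomposable modules (e.g.\ $\mathbb Z^{\omega}\in\Filt{\text{cyclics}}$ is not a direct sum of its cyclic generators), so categoricity itself must be what forces decomposability.

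The core is therefore a \emph{structure/non-structure dichotomy} carried out at each regular $\lambda>2^\kappa$ separately, which dissolves the up/down issue by routing both implications through a condition $C$ not mentioning $\lambda$. Fixing such a $\lambda$, every $M\in\mathcal D$ of cardinality $\lambda$ admits, via Lemma~\ref{Hill} applied with the regular cardinal $\kappa^+$ to a $\mathcal T$-filtration, a continuous $\lambda$-filtration $(M_\alpha\mid\alpha\le\lambda)$ with consecutive quotients in $\mathcal T$ and all $M_\alpha$ members of a Hill family (so each $M_\beta/M_\alpha$ is again $\mathcal T$-filtered). To it I attach the Eklof--Mekler $\Gamma$-invariant refined by the $\le 2^\kappa$ isomorphism types occurring as quotients; since any two $\lambda$-filtrations agree on a club by \cite[II.4.12]{EM}, this invariant is well defined. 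The non-structure half reads: if the building blocks are not all $\Add$-equivalent to one $W\in\mathcal T$, then, using closure under direct sums together with the tree-module construction of Lemma~\ref{l:3.4} to realize prescribed quotient patterns along prescribed stationary sets, I build two modules in $\mathcal D$ of size $\lambda$ with distinct refined $\Gamma$-invariants, hence non-isomorphic, contradicting $\lambda$-categoricity. This isolates the cardinal-independent condition $C$: there is $W\in\mathcal T$ with $\Add W=\Add S$ for every nonzero $S\in\mathcal T$, and $\mathcal T$-filtrations with trivial refined invariant split.

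Granting $C$, the structure half finishes uniformly in $\lambda$. The $\Add$-equivalence makes $S$ and $W$ mutual direct summands of finite powers of one another; since $\mu$ infinite gives $S^{(\mu)}\cong(S^{(\mu)})^{(\omega)}$, an Eilenberg swindle yields $S^{(\mu)}\cong W^{(\mu)}$ for \emph{every} infinite $\mu$, curing the downward-transfer gap of the second paragraph. The splitting clause, combined with the trivial invariant forced by $C$, shows for regular $\lambda$ that each $M$ of size $\lambda$ is a direct sum of members of $\mathcal T$, whence $M\cong W^{(\lambda)}$ after absorbing the $\le 2^\kappa$ types into copies of $W$. For singular $\lambda$ I would invoke a singular-compactness theorem for filtered modules to deduce the same decomposition from its restriction to a club of regular-sized Hill submodules; this also links categoricity at a singular $\lambda_0$ back to $C$ through the regular case. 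Either way every model of size $\lambda\ge(2^\kappa)^+$ is isomorphic to $W^{(\lambda)}$. The two points I expect to be hardest are the non-structure construction---engineering members of $\mathcal D$ with genuinely different refined $\Gamma$-invariants while staying inside the deconstructible class, and thereby pinning down the exact splitting content of $C$---and the singular case, where the interplay of singular compactness with the type-refined invariant is delicate.
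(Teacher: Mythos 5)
Your overall architecture (a dichotomy whose structure half is an Eilenberg swindle and whose non-structure half compares $\lambda$-filtrations on a club, in the spirit of Eklof--Mekler invariants) points in the right direction, but the cardinal-independent condition $C$ you isolate is provably the wrong one, and this breaks the non-structure half. You require a $W\in\mathcal T$ with $\Add W=\Add S$ for every nonzero $S\in\mathcal T$, read explicitly as mutual direct summands of \emph{finite} powers. Take $R=\mathbb Z$ and $\mathcal D=\Filt{\{\mathbb Z\}}$, the class of free abelian groups: it is $\kappa^+$-deconstructible and categorical in every $\lambda\geq(2^\kappa)^+$ (indeed in every uncountable cardinal), yet the blocks $\mathbb Z$ and $\mathbb Z^{(\omega)}$ satisfy $\Add \mathbb Z\neq\Add \mathbb Z^{(\omega)}$, since $\mathbb Z^{(\omega)}$ is a summand of no $\mathbb Z^{n}$. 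So failure of your $C$ cannot yield two non-isomorphic models of size $\lambda$, and no refinement of $\Gamma$-invariants will manufacture them here. The equivalence of blocks that categoricity actually forces is the much weaker isomorphism $S^{(2^\kappa)}\cong (S')^{(2^\kappa)}$ for all nonzero blocks $S,S'$ (the paper's condition (iii) in Lemma \ref{splitters}, extracted from one instance of categoricity via Lemma \ref{Eilenberg}, whose proof combines Hill's lemma with Walker's theorem on summands of direct sums of $\kappa$-generated modules); your finite-$\Add$ version is strictly stronger and false in categorical examples. A secondary soundness issue: your well-definedness argument for the refined invariant is too quick --- two $\lambda$-filtrations of isomorphic modules agree on a club as sets of submodules, but the \emph{successor quotients} at club points need not be isomorphic, so "the pattern of isomorphism types of consecutive quotients" is not club-invariant without substantial extra work.

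The second genuine gap is that the "splitting clause" of $C$ --- trivial invariant implies every member of $\mathcal D$ is a direct sum of blocks --- is exactly the crux of the theorem, and you leave both directions unproven. The paper supplies the missing mechanism by running the dichotomy on a different, Ext-theoretic condition: existence of a nonzero \emph{strong splitter} of cardinality $\leq\kappa$ in $\mathcal D$ (a module $M$ with $\Ext 1RM{M^{(I)}}=0$ for all $I$). If one exists, categoricity at a single $\lambda$ forces $M_{\mathcal C}^{(2^\kappa)}\cong P^{(2^\kappa)}$, so $M_{\mathcal C}$ is itself a strong splitter, and then Ext-vanishing splits every $\mathcal C$-filtration by induction on its length --- no invariant is needed, and the swindle finishes as you envisage. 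If none exists, the failure of strong splitting hands you concrete non-split extensions $0\to C^{(\kappa)}\to E\to C\to 0$, from which Lemma \ref{nsplitters} builds a chain of length $(2^\kappa)^{++}$ with cofinally many non-split successor steps and contradicts, via the club-agreement of $\lambda$-filtrations, the direct-sum decomposition that Lemma \ref{Eilenberg} extracts from categoricity; this is the rigorous incarnation of your $\Gamma$-invariant comparison, but it needs the Ext-theoretic input --- mere failure of $\Add$-equivalence produces no non-split extension. Finally, your singular-compactness detour is avoidable: Lemma \ref{Eilenberg} works at arbitrary (possibly singular) cardinals, since it rests only on Walker's theorem and Hill's lemma, so the paper's transfer to all $\lambda\geq(2^\kappa)^+$, singular included, comes out uniformly; the regular case is only isolated inside Lemma \ref{regular}, applied at the regular cardinal $(2^\kappa)^+$.
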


In order to prove the theorem, we will need several auxiliary results. The first is a classic fact concerning direct sum decompositions of modules due to Carol Walker \cite[Theorem~4.2]{W}.

\begin{lemma}\label{Walker} Let $R$ be a ring, $\kappa$ be an infinite cardinal, and $M$ and $N$ be modules such that $N$ is a direct summand in $M$. Assume that $M$ is a direct sum of $\kappa$-generated modules. Then so is $N$.
\end{lemma}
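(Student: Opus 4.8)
The plan is to reduce everything to building a suitable continuous filtration of $M$ by submodules that simultaneously respect the given decomposition $M = \bigoplus_{i \in I} M_i$ into $\kappa$-generated pieces and the splitting $M = N \oplus N'$ witnessing that $N$ is a summand. Write $p \colon M \to N$ and $q \colon M \to N'$ for the two projections. I would call a submodule $F \subseteq M$ \emph{closed} if $F = \bigoplus_{i \in J} M_i$ for some $J \subseteq I$ and $p(F) \subseteq F$ (equivalently $q(F) \subseteq F$, equivalently $F = (F \cap N) \oplus (F \cap N')$). The point of closedness is twofold: such an $F$ is a direct summand of $M$ with complement $\bigoplus_{i \notin J} M_i$, and it decomposes compatibly with $N \oplus N'$. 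First I would record the elementary fact that a direct summand of a $\kappa$-generated module is again $\kappa$-generated, being a quotient of it.

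Next I would prove a closing-up step: given a closed $F$ with support $J$ and any index $i^\ast \in I$, there is a closed $F' \supseteq F + M_{i^\ast}$ whose support exceeds $J$ by at most $\kappa$ indices. This is a routine countable iteration: set $K_0 = \{i^\ast\}$ and, having $K_n$ with $|K_n| \le \kappa$, let $K_{n+1}$ adjoin the $M_i$-supports of $p\bigl(\bigoplus_{i \in K_n} M_i\bigr)$ and $q\bigl(\bigoplus_{i \in K_n} M_i\bigr)$; since $\bigoplus_{i \in K_n} M_i$ is $\kappa$-generated and each element of a direct sum has finite support, $|K_{n+1}| \le \kappa$, and $F' = \bigoplus_{i \in J \cup \bigcup_n K_n} M_i$ is closed. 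Well-ordering $I$ and iterating this, taking unions at limits (which preserve closedness), yields a continuous increasing chain $(F_\alpha \mid \alpha \le \sigma)$ of closed submodules with $F_0 = 0$, $F_\sigma = M$, and $|J_{\alpha+1} \setminus J_\alpha| \le \kappa$; in particular each quotient $F_{\alpha+1}/F_\alpha \cong \bigoplus_{i \in J_{\alpha+1} \setminus J_\alpha} M_i$ is $\kappa$-generated.

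Finally I would pass to $N$. Intersecting the chain with $N$ gives a continuous chain $(F_\alpha \cap N)$ with $F_0 \cap N = 0$ and $F_\sigma \cap N = N$, and from the compatible decompositions of $F_\alpha$ and $F_{\alpha+1}$ one reads off that $(F_{\alpha+1} \cap N)/(F_\alpha \cap N)$ is a direct summand of the $\kappa$-generated module $F_{\alpha+1}/F_\alpha$, hence $\kappa$-generated. The crux is to show the chain actually splits. Here I would invoke the modular law: writing $M = (F_\alpha \cap N) \oplus H_\alpha$ with $H_\alpha = (F_\alpha \cap N') \oplus \bigoplus_{i \notin J_\alpha} M_i$ and noting $F_\alpha \cap N \subseteq N$, I obtain $N = (F_\alpha \cap N) \oplus (N \cap H_\alpha)$, so that \emph{each} $F_\alpha \cap N$ is a direct summand of $N$. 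A continuous chain whose every term is a direct summand of the ambient module splits: by the modular law each inclusion $F_\alpha \cap N \subseteq F_{\alpha+1} \cap N$ splits, say $F_{\alpha+1} \cap N = (F_\alpha \cap N) \oplus C_\alpha$, and a transfinite induction (with the limit step handled by finiteness of supports) gives $N = \bigoplus_{\alpha < \sigma} C_\alpha$, where each $C_\alpha \cong (F_{\alpha+1} \cap N)/(F_\alpha \cap N)$ is $\kappa$-generated. I expect the main obstacle to be precisely this last splitting: closedness is what forces $F_\alpha \cap N$ to be a summand of all of $N$ (not merely of $F_{\alpha+1} \cap N$), and that global summand property is exactly what makes the telescoping of the chain into a direct sum go through.
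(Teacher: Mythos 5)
Your proof is correct, and it matches the source the paper relies on: the paper gives no proof of this lemma, citing it as Walker's classical theorem \cite[Theorem~4.2]{W}, and your argument is exactly the standard Kaplansky--Walker proof, i.e.\ closing up to a continuous chain of ``closed'' summands $F_\alpha=\bigoplus_{i\in J_\alpha}M_i$ compatible with the splitting $M=N\oplus N'$, with successor steps of support size $\le\kappa$, and then telescoping the induced chain $(F_\alpha\cap N)$. All the delicate points check out: the counting in the closing-up iteration ($\kappa$-many generators, each of finite support), the modular-law argument showing each $F_\alpha\cap N$ is a summand of $N$ and hence of $F_{\alpha+1}\cap N$, and the limit stage of the telescoping (any finite relation among the $C_\alpha$ lives in some $F_\gamma\cap N$ where directness already holds).
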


The proof of the second fact makes use of a version of Eilenberg's trick \cite[Proposition~18.1]{P} adapted to our setting, and of Lemma~\ref{Hill}.

\begin{lemma}\label{Eilenberg} Let $R$ be a ring and $\mathcal D$ a class of modules closed under direct sums which is $\lambda$-categorical for some $\lambda \geq \aleph_0$. Let $0 \neq M, N \in \mathcal D$ and let $\mu = \card M + \aleph_0$ and $\nu = \card N + \aleph_0$. Assume that $\mu \leq \nu \leq \lambda$. Then $M^{(\nu)} \cong N^{(\nu)}$. Moreover, if $\mathcal D$ is $\mu^+$-deconstructible, then $N$ is a~direct sum of $\mu$-presented modules from $\mathcal D$.
\end{lemma}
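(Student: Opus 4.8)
The plan is to prove the two assertions of Lemma~\ref{Eilenberg} in turn, using $\lambda$-categoricity to manufacture an isomorphism between large direct powers of $M$ and $N$, and then transferring a direct-sum decomposition back across that isomorphism.

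\emph{The isomorphism $M^{(\nu)} \cong N^{(\nu)}$.} First I would reduce to comparing suitably chosen direct powers of $M$ and $N$ that both have cardinality $\lambda$, so that $\lambda$-categoricity can be invoked. The point of the Eilenberg trick is that $M^{(\nu)}$ and $N^{(\nu)}$ are mutually ``absorbing'': since $\mu \le \nu$ we have $M^{(\nu)} \oplus N^{(\nu)} \cong (M \oplus N)^{(\nu)}$, and I want to see this common module as a large power of both $M^{(\nu)}$ and $N^{(\nu)}$. Concretely, set $P = (M \oplus N)^{(\nu)}$. Since $\mathcal D$ is closed under direct sums, $P \in \mathcal D$, and one checks $\card P = \nu$, so $P^{(\lambda)} \in \mathcal D$ has cardinality $\lambda$. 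The standard computation $\bigl(M^{(\nu)}\bigr)^{(\lambda)} \cong M^{(\lambda)}$ and $\bigl(P\bigr)^{(\lambda)} \cong M^{(\lambda)} \oplus N^{(\lambda)}$ lets me realize $M^{(\lambda)}$, $N^{(\lambda)}$ and $P^{(\lambda)}$ all as modules in $\mathcal D$ of cardinality exactly $\lambda$ (here I use $0 \ne M,N$ and $\mu,\nu \le \lambda$ to guarantee that these powers have cardinality $\lambda$ and not less). By $\lambda$-categoricity they are pairwise isomorphic; in particular $M^{(\lambda)} \cong N^{(\lambda)}$. To descend from the $\lambda$-power to the $\nu$-power, I would apply the Eilenberg swindle directly at level $\nu$: the genuine content is that $M^{(\nu)} \oplus N^{(\nu)} \cong M^{(\nu)}$ and symmetrically $M^{(\nu)} \oplus N^{(\nu)} \cong N^{(\nu)}$ once we know $M^{(\lambda)} \cong N^{(\lambda)}$ forces each of $M,N$ to be a summand, up to the relevant power, of the other. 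The cleanest route is: categoricity gives $M^{(\lambda)}\cong N^{(\lambda)}$, hence $M$ is isomorphic to a direct summand of $N^{(\lambda)}$ and $N$ to a summand of $M^{(\lambda)}$; feeding these into the swindle at level $\nu = \card N + \aleph_0 \ge \card M + \aleph_0 = \mu$ yields $M^{(\nu)}\cong N^{(\nu)}$.

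\emph{$N$ is a direct sum of $\mu$-presented modules.} Assume now $\mathcal D$ is $\mu^+$-deconstructible, so $\mathcal D = \Filt{\mathcal D^{<\mu^+}} = \Filt{\mathcal D^{\le \mu}}$, i.e.\ every module in $\mathcal D$ is filtered by $\le\!\mu$-presented modules from $\mathcal D$. Since $M$ has cardinality $\mu$, and $\mu \ge \card R + \aleph_0$, $M$ itself is $\mu$-presented, and therefore so is the finite, hence $\mu$-indexed, building block used in $M^{(\nu)}$; thus $M^{(\nu)}$ is visibly a direct sum of $\mu$-presented modules from $\mathcal D$. Transporting along the isomorphism $N^{(\nu)} \cong M^{(\nu)}$ from the first part, $N^{(\nu)}$ is a direct sum of $\mu$-presented modules. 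Now $N$ is a direct summand of $N^{(\nu)}$, so by Lemma~\ref{Walker} (applied with $\kappa = \mu$, noting that $\mu$-presented modules are in particular $\mu$-generated) the summand $N$ is itself a direct sum of $\mu$-generated modules. To upgrade ``$\mu$-generated'' to ``$\mu$-presented and in $\mathcal D$'', I would invoke Lemma~\ref{Hill} for $\kappa = \mu^+$ applied to a fixed $\mathcal D^{\le\mu}$-filtration of $N$: the resulting family $\mathcal F$ is closed under sums and intersections, contains the filtration, and its consecutive quotients are $\mathcal D^{\le\mu}$-filtered, which lets me refine the $\mu$-generated decomposition of $N$ into one whose summands are $\mu$-presented and lie in $\mathcal D$.

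\emph{The main obstacle.} The delicate point is the passage from the crude $\mu$-generated decomposition delivered by Walker's theorem to a decomposition into $\mu$-presented modules \emph{that belong to $\mathcal D$}. Walker controls cardinality of generating sets but says nothing about presentations or membership in $\mathcal D$, and a direct summand of a $\mathcal D$-filtered module need not be $\mathcal D$-filtered without extra input. This is exactly where deconstructibility (not mere cardinality bounds) is indispensable, and where Lemma~\ref{Hill} must be deployed carefully: I expect the real work to lie in simultaneously arranging the Hill family so that the pieces cutting out the Walker summands are ``closed'' in $\mathcal F$, so that each piece is both $\mu$-presented and has a $\mathcal D^{\le\mu}$-filtration, hence lies in $\mathcal D = \Filt{\mathcal D^{\le\mu}}$.
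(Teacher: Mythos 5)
Your first assertion and its proof match the paper's argument essentially step for step: $\lambda$-categoricity gives $M^{(\lambda)}\cong N^{(\lambda)}$ (both are in $\mathcal D$ and have cardinality exactly $\lambda$ since $M,N\neq 0$ and $\mu\leq\nu\leq\lambda$), cardinality bookkeeping then makes each of $M^{(\nu)}$ and $N^{(\nu)}$ a direct summand of the other, and the Eilenberg swindle at level $\nu$, using $\nu\cdot\aleph_0=\nu$, yields $M^{(\nu)}\cong N^{(\nu)}$; your detour through $P=(M\oplus N)^{(\nu)}$ is harmless and inessential. The opening of your moreover clause also coincides with the paper's: transport the decomposition of $M^{(\nu)}$ into $\mu$-generated pieces across $N^{(\nu)}\cong M^{(\nu)}$ and apply Lemma~\ref{Walker} to the summand $N$, obtaining $N=\bigoplus_{i\in I}M_i$ with each $M_i$ $\mu$-generated. (One small slip here: you justify ``$M$ is $\mu$-presented'' by ``$\mu\geq\card{R}+\aleph_0$,'' which is not among the hypotheses of the lemma; but Walker's theorem only needs $\mu$-generated, which $\card{M}\leq\mu$ gives for free, so nothing is lost.)

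The genuine gap is exactly the step you flag as ``the main obstacle'' and then leave unexecuted: the passage from Walker's $\mu$-generated decomposition to a decomposition into $\mu$-presented modules \emph{from $\mathcal D$}. The phrase ``which lets me refine'' is a placeholder for what is, in fact, the entire content of the paper's proof of the moreover clause; worse, ``refine'' points in the wrong direction, since an individual Walker summand $M_i$ need not lie in $\mathcal D$ nor be $\mu$-presented --- one must \emph{coarsen}, i.e.\ regroup the $M_i$ into blocks. Concretely, the paper fixes a generating set $\{n_\alpha\mid\alpha<\xi\}$ of $N$ and a Hill family $\mathcal F$ (Lemma~\ref{Hill} with $\kappa=\mu^+$, applied to a $\mathcal D$-filtration of $N$ with $\mu$-presented factors), and constructs by transfinite recursion a continuous chain $(F_\alpha\in\mathcal F\mid\alpha\leq\xi)$ in which each $F_\alpha=\bigoplus_{i\in I_\alpha}M_i$ is a canonical subsum of the Walker decomposition. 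The successor step $\alpha=\gamma+1$ is an $\omega$-step zig-zag: alternately apply Lemma~\ref{Hill}(iv) to produce $P_m\in\mathcal F$ with $P_0\supseteq F_\gamma\cup\{n_\gamma\}$ and $\mu$-presented consecutive quotients, and enlarge the index set $J_m$ by at most $\mu$ indices so that $P_m\subseteq\bigoplus_{i\in J_{m+1}}M_i$; after $\omega$ steps, $F_\alpha=\bigcup_{m<\omega}P_m=\bigoplus_{i\in\bigcup_{m<\omega}J_m}M_i$ is simultaneously in $\mathcal F$ and a subsum. Then Lemma~\ref{Hill}(iii) together with $\mathcal D=\Filt{\mathcal D}$ gives $F_{\alpha+1}/F_\alpha\in\mathcal D$, the countable chain $(P_m/F_\gamma)_{m<\omega}$ shows it is $\mu$-presented, and including $n_\gamma$ at each step forces $F_\xi=N$; finally $N=\bigoplus_{\alpha<\xi}D_\alpha$ with $D_\alpha=\bigoplus_{i\in I_{\alpha+1}\setminus I_\alpha}M_i\cong F_{\alpha+1}/F_\alpha$. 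Your intuition about where the work lies (making the pieces that cut out the Walker summands ``closed'' in $\mathcal F$) is exactly right, but identifying the difficulty is not the same as resolving it; without this interleaving construction the moreover clause remains unproved.
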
  

\begin{proof} The $\lambda$-categoricity of $\mathcal D$ implies that $M^{(\lambda)} \cong N^{(\lambda)}$. By our cardinality assumptions it follows that $M^{(\nu)}$ is a direct summand in $N^{(\nu)}$, so $M^{(\nu)} \oplus X \cong N^{(\nu)}$ for some  $X \in \rmod R$. Similarly $N^{(\nu)} \oplus Y \cong M^{(\nu)}$ for some $Y \in \rmod R$. 

Then $N^{(\nu)} \cong  (N^{(\nu)})^{(\omega)} \cong M^{(\nu)} \oplus (X \oplus M^{(\nu)})^{(\omega)} \cong M^{(\nu)} \oplus (N^{(\nu)})^{(\omega)} \cong  M^{(\nu)} \oplus N^{(\nu)}$. Similarly  $M^{(\nu)} \cong M^{(\nu)} \oplus N^{(\nu)}$. 

\smallskip

It remains to prove the moreover clause. Fix a~generating set $\{n_\alpha\mid \alpha<\xi\}$ of $N$ where $\xi$ is an ordinal number. By Lemma~\ref{Walker}, it follows that $N = \bigoplus_{i\in I} M_i$ where $M_i$ is non-zero and $\mu$-generated for every $i\in I$. Since $\mathcal D$ is $\mu^+$-deconstructible, we can fix a~$\mathcal D$-filtration $(N_\alpha\mid \alpha\leq\sigma)$ of $N$ with $\mu$-presented consecutive factors and use Lemma~\ref{Hill} to obtain a~family $\mathcal F$ for this filtration and $\kappa := \mu^+$.

To prove the moreover clause, we are going to build another $\mathcal D^{<\kappa}$-filtration, $(F_\alpha\in\mathcal F \mid \alpha\leq\xi)$, of $N$ which will consist of canonical direct summands of $\bigoplus_{i\in I} M_i$. Put $F_0 = 0$ and $I_0 = \varnothing$. Assume that $0<\alpha\leq\xi$ and $F_\beta = \bigoplus_{i\in I_\beta} M_i\in \mathcal F$ is defined for each $\beta<\alpha\leq\xi$ in such a~way that $F_{\beta+1}/F_\beta \in \mathcal D^{<\kappa}$ if $\beta+1<\alpha$. If $\alpha$ is a~limit ordinal, we put $I_\alpha = \bigcup_{\beta<\alpha} I_\beta$ and $F_\alpha = \bigcup_{\beta<\alpha} F_\beta$. The latter module is in $\mathcal F$ by Lemma~\ref{Hill}(ii), and it is clear that $F_\alpha = \bigoplus_{i\in I_\alpha} M_i$.

Now let $\alpha = \gamma+1$ and put $J_0 = I_\gamma$. Using Lemma~\ref{Hill}(iv), we find a~$P_0\in\mathcal F$ such that $F_\gamma\cup\{n_\gamma\}\subseteq P_0$ and $P_0/F_\gamma$ is $\mu$-presented. It follows that there is a~$J_1\subseteq I$ such that $J_0\subseteq J_1$, $|J_1\setminus J_0|\leq\mu$ and $P_0\subseteq \bigoplus_{i\in J_1} M_i$.

Since $\bigoplus_{i\in J_1\setminus J_0} M_i$ is $\mu$-generated, we can use Lemma~\ref{Hill}(iv) again and obtain a~$P_1\in\mathcal F$ such that $\bigoplus_{i\in J_1} M_i\subseteq P_1$ and $P_1/P_0$ is $\mu$-presented. We continue by choosing $J_2\subseteq I$ such that $J_1\subseteq J_2$, $|J_2\setminus J_1|\leq\mu$ and $P_1\subseteq \bigoplus_{i\in J_2} M_i$, and so on.

After countably many steps, we define $I_\alpha = \bigcup_{m<\omega} J_m$ and $F_\alpha = \bigcup_{m<\omega} P_m$. It immediately follows that $F_\alpha = \bigoplus_{i\in I_\alpha} M_i$ and $F_\alpha\in\mathcal F$. Moreover, we get $F_\alpha/F_\gamma\in\mathcal D$ by Lemma~\ref{Hill}(iii). Finally, $F_\alpha/F_\gamma$ is $\mu$-presented since it is the union of the countable ascending chain $0\subseteq P_0/F_\gamma\subseteq P_1/F_\gamma\subseteq\dots$ whose consecutive factors are $\mu$-presented by the construction.

Finally, $F_\xi = N$ since $\{n_\alpha\mid \alpha<\xi\}\subseteq F_\xi$. It follows that $N = \bigoplus_{\alpha<\xi} D_\alpha$ where $D_\alpha = \bigoplus_{i\in I_{\alpha+1}\setminus I_\alpha} M_i \cong F_{\alpha+1}/F_\alpha$, whence $N$ is a~direct sum of $\mu$-presented modules from~$\mathcal D$.
\end{proof}

Recall that a module $M$ is a \emph{strong splitter} provided that $\Ext 1RM{M^{(I)}} = 0$ for each set $I$. Notice that if $M$ is a strong splitter, then so is any direct summand of $M^{(I)}$ for any set $I$. Also, if $M$ is a $\kappa$-presented module and $\kappa \geq \aleph_0$, then $M$ is a strong splitter, iff $\Ext 1RM{M^{(\kappa)}} = 0$ (see e.g.\ \cite[Lemma 2.2]{T}).

The following result was proved in \cite[Theorem 2.12]{T}.  

\begin{lemma}\label{regular} Let $R$ be a ring and $\mathcal D$ a deconstructible class of modules. Assume that $\mathcal D$ contains a  module $M$ which is not a strong splitter. Let $\nu = \card M + \card R + \aleph_0$. Then $\mathcal D$ is not $\lambda$-categorical for any regular cardinal $\lambda \geq \nu^+$.
\end{lemma}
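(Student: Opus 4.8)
The plan is to show, under these hypotheses, that $\mathcal D$ contains two non-isomorphic modules of cardinality $\lambda$; one will be the homogeneous module $A = M^{(\lambda)}$, the other a module $W$ built so as to fail to be a direct sum of copies of $M$. Since $\mathcal D$ is deconstructible it satisfies $\mathcal D = \Filt{\mathcal D}$ and is closed under arbitrary direct sums; as $M \in \mathcal D$, every module admitting a filtration all of whose consecutive factors are isomorphic to $M$ lies in $\Filt{\{M\}} \subseteq \mathcal D$, and in particular $A = M^{(\lambda)} \in \mathcal D$ with $\card A = \lambda$. First I would record the one homological input: $M \neq 0$, and since $\card M \leq \nu$ and $\card R \leq \nu$ the module $M$ is $\nu$-presented, so the characterisation of strong splitters recalled above applies with $\kappa = \nu$; as $M$ is not a strong splitter this gives $\Ext 1RM{M^{(\nu)}} \neq 0$, whence a non-split extension $0 \to M^{(\nu)} \to P \to M \to 0$ exists.

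The detection mechanism rests on two observations. The first is lattice-theoretic: if $W$ carries a continuous filtration $(W_\alpha \mid \alpha \leq \lambda)$ and $W_\alpha$ is a direct summand of $W$, say $W = W_\alpha \oplus W'$, then the modular law gives $W_{\alpha+1} = W_\alpha \oplus (W_{\alpha+1} \cap W')$, so that the one-step extension $0 \to W_\alpha \to W_{\alpha+1} \to W_{\alpha+1}/W_\alpha \to 0$ splits. The second is the standard fact that, for regular uncountable $\lambda$, any two $\lambda$-filtrations of a fixed $\lambda$-generated module coincide on a club (\cite[II.4.12]{EM}): hence, if $W \cong M^{(\lambda)}$, then comparing our filtration of $W$ with the canonical filtration of $M^{(\lambda)}$ by the direct summands $M^{(\alpha)}$ produces a club $C \subseteq \lambda$ such that $W_\alpha$ is a direct summand of $W$ for every $\alpha \in C$.

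Next I would construct $W$ as the union of a continuous filtration $(W_\alpha \mid \alpha \leq \lambda)$ with all factors isomorphic to $M$, having fixed in advance a stationary set $S \subseteq \lambda$: for $\alpha \notin S$ I take the split step $W_{\alpha+1} = W_\alpha \oplus M$, while for $\alpha \in S$ I glue in a copy of $M$ by a non-split extension $0 \to W_\alpha \to W_{\alpha+1} \to M \to 0$. To make the non-split step available I must guarantee $\Ext 1RM{W_\alpha} \neq 0$ at the stages $\alpha \in S$, which I arrange by maintaining a direct summand of $W_\alpha$ isomorphic to $M^{(\nu)}$ (a ``reservoir''): then $\Ext 1RM{W_\alpha}$ has $\Ext 1RM{M^{(\nu)}} \neq 0$ as a direct summand, so the required non-split step exists. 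The resulting module $W = W_\lambda$ is $\{M\}$-filtered, hence $W \in \mathcal D$, and $\card W = \lambda$ since each step adds at most $\nu < \lambda$ elements and there are $\lambda$ steps.

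Finally I would combine the pieces. If $\mathcal D$ were $\lambda$-categorical, then $W \cong A = M^{(\lambda)}$, so by the second observation there is a club $C$ with $W_\alpha$ a direct summand of $W$ for all $\alpha \in C$, and by the first observation the step at each such $\alpha$ splits; but at every $\alpha \in S$ the step was chosen non-split, and $S \cap C \neq \varnothing$ since $S$ is stationary and $C$ is a club, a contradiction. Hence $W \not\cong A$, so $\mathcal D$ is not $\lambda$-categorical. I expect the main obstacle to be the bookkeeping in the construction of $W$: one must propagate the $M^{(\nu)}$-reservoir through the non-split steps and, because a stationary set necessarily has elements that are limits of $S$, treat those limit stages $\alpha \in S$ (where $W_\alpha$ is merely a union) by choosing the reservoir along a fixed cofinal sequence in $\alpha$ whose $M$-summands split off. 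It is precisely here that the regularity of $\lambda$ and the bound $\lambda \geq \nu^+$ are used, both to run a recursion of length $\lambda$ with pieces of size $<\lambda$ and to make the club/stationary dichotomy meaningful.
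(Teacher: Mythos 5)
The paper does not actually prove this lemma: it is quoted from \cite[Theorem 2.12]{T}. Your blueprint is the same one used there and replayed in the paper's own proof of Lemma~\ref{nsplitters}: realize the non-splitter via $\Ext 1RM{M^{(\nu)}}\neq 0$ (legitimate, since $\card M\leq\nu\geq\card R+\aleph_0$ makes $M$ $\nu$-presented), build a continuous chain with factors $\cong M$ whose steps are non-split on a stationary set, compare its $\lambda$-filtration with the canonical one of $M^{(\lambda)}$ via the club-coincidence fact \cite[II.4.12]{EM}, and use the modular law to split the step at a point of $S\cap C$. All of that part of your argument is correct, including the observation that pushing the nonzero class of $0\to M^{(\nu)}\to P\to M\to 0$ along the split monomorphism $\Ext 1RM{M^{(\nu)}}\to\Ext 1RM{M^{(\nu)}\oplus D}$ keeps it nonzero.

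The genuine gap is exactly where you flag difficulty, and your proposed remedy does not close it. By Fodor's lemma, the set of $\alpha\in S$ with $\sup(S\cap\alpha)<\alpha$ is non-stationary (the map $\alpha\mapsto\sup(S\cap\alpha)$ would be regressive and constant on a stationary set, contradicting unboundedness), so at \emph{almost every} $\alpha\in S$ the earlier non-split steps consume reservoir copies cofinally below $\alpha$; ``choosing the reservoir along a fixed cofinal sequence in $\alpha$ whose $M$-summands split off'' is not enough, both because a union of an increasing chain of direct summands whose complements do not cohere need not be a summand of the union, and because at $\lambda=\nu^+$ the supply and the total consumption below $\alpha$ both have cardinality $\nu$, so a greedy or ladder-based choice can exhaust the unconsumed copies. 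The repair is a pre-assignment: tile $\lambda$ by the blocks $[\nu\cdot\iota,\nu\cdot(\iota+1))$, take $S$ a stationary set of limit ordinals divisible by $\nu$, and assign to each $\delta\in S$ (in increasing order, always taking the least unassigned block) a block $I_\delta\subseteq\delta$ of split-step indices, so that the sets $I_\delta$ are \emph{pairwise disjoint}; a count of blocks below the $j$-th element of $S$ shows this recursion never gets stuck. One then proves, by induction including limit stages, the invariant $W_\gamma=B_\gamma\oplus\bigoplus_{\xi\in U_\gamma}M_\xi$, where $U_\gamma$ is the set of fresh copies added below $\gamma$ and not consumed at any earlier stage of $S$ (disjointness guarantees $I_\delta\subseteq U_\delta$, so the reservoir $\bigoplus_{\xi\in I_\delta}M_\xi\cong M^{(\nu)}$ really is a direct summand of $W_\delta$, even when $\delta$ is a limit of $S$). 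With this bookkeeping supplied, your proof goes through and is the argument of \cite[Theorem 2.12]{T} in substance; without it, the construction is not yet defined at the stages that actually matter.
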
  

The proof of Theorem \ref{maindec} will proceed by considering two alternatives for the class $\mathcal D$ depending on the existence or non-existence of non-zero strong splitters of cardinality $\leq \kappa$ in $\mathcal D$. 
The first alternative generalizes the basic case of projective modules considered in \cite[Proposition 1.1]{T}: 

\begin{lemma}\label{splitters} Let $R$ be a ring and $\mathcal D$ be a $\kappa^+$-deconstructible class of modules for some $\kappa \geq \card R + \aleph_0$. Let $\mathcal C$ be a representative set of all $\kappa$-generated modules in $\mathcal D$ (whence $\mathcal D = \Filt{\mathcal C}$ and $\card{\mathcal C} \leq 2^\kappa$). Let $M_{\mathcal C} = \bigoplus_{C \in \mathcal C} C$ (so $M_{\mathcal C} \in \mathcal D$ and $\card {M_{\mathcal C}} \leq 2^\kappa$). 

Assume that $\mathcal C$ contains a non-zero strong splitter. Then the following conditions are equivalent

\begin{enumerate}
\item $\mathcal D$ is $\lambda$-categorical for each $\lambda > 2^\kappa$,
\item $\mathcal D$ is $\lambda$-categorical for some $\lambda \geq 2^\kappa$,
\item $M_{\mathcal C}^{(2^\kappa)} \cong P^{(2^\kappa)}$ for each $0 \neq P \in \mathcal C$.
\end{enumerate}
\end{lemma}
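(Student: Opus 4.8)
The plan is to prove the cycle of implications $(i)\Rightarrow(ii)\Rightarrow(iii)\Rightarrow(i)$, since $(i)\Rightarrow(ii)$ is immediate (any $\lambda>2^\kappa$ witnesses $\lambda\geq 2^\kappa$, and a categorical class in that range must contain a module of that cardinality, which it does because $\mathcal D=\Filt{\mathcal C}$ admits arbitrarily large direct sums).

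For $(ii)\Rightarrow(iii)$: assume $\mathcal D$ is $\lambda$-categorical for some $\lambda\geq 2^\kappa$, and fix $0\neq P\in\mathcal C$. The idea is to compare $M_{\mathcal C}$ and $P$ via Lemma~\ref{Eilenberg}. Both have cardinality $\leq 2^\kappa$, and $P$ embeds as a direct summand into $M_{\mathcal C}$ (being one of the summands defining $M_{\mathcal C}$), so one can arrange $\card P+\aleph_0\leq\card{M_{\mathcal C}}+\aleph_0\leq 2^\kappa\leq\lambda$. Applying Lemma~\ref{Eilenberg} with the roles of $M,N$ suitably chosen and $\nu=2^\kappa$ then yields $M_{\mathcal C}^{(2^\kappa)}\cong P^{(2^\kappa)}$; I would feed in the categoricity to get $M^{(\lambda)}\cong N^{(\lambda)}$ and run the Eilenberg-swindle computation already recorded in that lemma. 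A minor subtlety is that $\card{M_{\mathcal C}}$ could be strictly below $2^\kappa$, in which case one pads with free summands or simply notes $\card{M_{\mathcal C}}+\aleph_0\leq 2^\kappa=\nu$, so the cardinality hypothesis $\mu\leq\nu\leq\lambda$ of Lemma~\ref{Eilenberg} is met.

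The core of the argument is $(iii)\Rightarrow(i)$, and I expect this to be the main obstacle. Fix an arbitrary $\lambda>2^\kappa$; I must show all modules of cardinality $\lambda$ in $\mathcal D$ are isomorphic. Take any such $N\in\mathcal D$ with $\card N=\lambda$. Since $\mathcal D=\Filt{\mathcal C}$ is $\kappa^+$-deconstructible, the moreover-clause of Lemma~\ref{Eilenberg} (applicable once $\lambda$-categoricity at the relevant level is in hand) should let me write $N$ as a direct sum $\bigoplus_{\alpha<\xi}D_\alpha$ of $\kappa$-presented modules from $\mathcal D$, each $D_\alpha$ isomorphic to some member of $\mathcal C$. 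The number of summands is exactly $\lambda$ by a cardinality count, since $\card N=\lambda>2^\kappa\geq\card{D_\alpha}$. The strong-splitter hypothesis now enters: letting $Q\in\mathcal C$ be the fixed non-zero strong splitter, condition $(iii)$ gives $M_{\mathcal C}^{(2^\kappa)}\cong Q^{(2^\kappa)}$, and more importantly each $D_\alpha$ (being isomorphic to a summand of $M_{\mathcal C}$) satisfies $D_\alpha^{(2^\kappa)}\cong Q^{(2^\kappa)}$ after taking enough copies. The goal is to absorb all the $D_\alpha$ into a single isomorphism type: I would show $N\cong Q^{(\lambda)}$ by grouping the $\lambda$ summands into $\lambda$ blocks of size $2^\kappa$, replacing each block $\bigoplus D_\alpha$ (cardinality $\leq 2^\kappa$, a summand of $M_{\mathcal C}^{(2^\kappa)}$) by $Q^{(2^\kappa)}$ via $(iii)$, and then collapsing $(Q^{(2^\kappa)})^{(\lambda)}\cong Q^{(\lambda)}$. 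Since $Q^{(\lambda)}$ is a fixed module independent of $N$, all modules of cardinality $\lambda$ in $\mathcal D$ are isomorphic to it.

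The delicate point I anticipate is justifying the block-replacement rigorously: I need each block to be a direct summand of $M_{\mathcal C}^{(2^\kappa)}$ so that $(iii)$ and the strong-splitter stability (a summand of $M^{(I)}$ for a strong splitter $M$ is again a strong splitter, as recalled before Lemma~\ref{regular}) can be invoked to identify its stabilized isomorphism type with $Q^{(2^\kappa)}$. Ensuring the blocks genuinely split off $N$ and that the cardinal bookkeeping $(2^\kappa)\cdot\lambda=\lambda$ stays exact is where care is required; the strong-splitter condition is precisely what prevents $\mathrm{Ext}$ obstructions from blocking the repackaging, which is why this hypothesis is indispensable here and why Lemma~\ref{regular} covers the complementary case in the proof of the main theorem.
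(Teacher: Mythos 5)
Your steps (i)$\Rightarrow$(ii) and (ii)$\Rightarrow$(iii) match the paper and are fine, but your (iii)$\Rightarrow$(i) is circular at its first move. You invoke the moreover clause of Lemma~\ref{Eilenberg} to decompose an arbitrary $N\in\mathcal D$ of cardinality $\lambda$ into $\kappa$-presented summands from $\mathcal C$; however, every clause of Lemma~\ref{Eilenberg} is proved under the hypothesis that $\mathcal D$ is $\lambda$-categorical, which in this direction is precisely the conclusion you are trying to reach. Your parenthetical ``applicable once $\lambda$-categoricity at the relevant level is in hand'' never gets discharged: in (iii)$\Rightarrow$(i) you have only the isomorphism $M_{\mathcal C}^{(2^\kappa)}\cong P^{(2^\kappa)}$ and no categoricity at any cardinal. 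This is exactly where the strong-splitter hypothesis must enter, and in a different role than you assign it: choosing a non-zero strong splitter $P\in\mathcal C$, condition (iii) exhibits $M_{\mathcal C}$ as a direct summand of $P^{(2^\kappa)}$, hence $M_{\mathcal C}$ is itself a strong splitter; then, by induction on the length of a $\mathcal C$-filtration, every relevant extension splits and every $D\in\mathcal D=\Filt{\mathcal C}$ is a direct sum of modules from $\mathcal C$ --- with no categoricity assumed. Without this step your argument has no access to the decomposition of $N$ at all.

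Your block-replacement step also fails as stated. A block $\bigoplus_{\alpha} D_\alpha$ of $2^\kappa$ summands from $\mathcal C$ is in general only a direct \emph{summand} of $M_{\mathcal C}^{(2^\kappa)}\cong Q^{(2^\kappa)}$, not isomorphic to it: condition (iii) yields genuine isomorphisms only for homogeneous powers $C^{(2^\kappa)}$, that is, for isomorphism types occurring with multiplicity at least $2^\kappa$. The strong-splitter ``stability'' you cite is an $\operatorname{Ext}$-vanishing statement and cannot upgrade a summand to an isomorphism --- that is a cancellation problem, which only a swindle resolves. The paper's route avoids both defects: write $D\cong D'\oplus\bigoplus_{C}P^{(\kappa_C)}$, where $D'$ collects all types with multiplicity $<2^\kappa$, so $\card{D'}\leq 2^\kappa<\lambda$ (note this also covers singular $\lambda$, where a pigeonhole argument on $\lambda$ blocks of size $2^\kappa$ would break down); the large part is then $\cong P^{(\lambda)}$, while $D'$ is a summand of $P^{(2^\kappa)}$, hence of $P^{(\lambda)}$, by (iii); finally Eilenberg's trick absorbs it, $D'\oplus P^{(\lambda)}\cong P^{(\lambda)}$, giving $D\cong P^{(\lambda)}$, a single isomorphism type independent of $D$.
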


\begin{proof} That (i) $\implies$ (ii) is obvious. If (ii) holds, then (iii) follows by Lemma \ref{Eilenberg}, in the setting when $M = P$ and $N = M_{\mathcal C}$.


Assume (iii). First, we choose a non-zero strong splitter $P \in \mathcal C$. Since $M_{\mathcal C}$ is a direct summand of the strong splitter $P^{(2^\kappa)}$, also $M_{\mathcal C}$ is a strong splitter. Thus we can proceed as in Alternative 1 in \cite[p.\,378]{T}: Since $M_{\mathcal C}$ is a strong splitter, by induction on the length of a $\mathcal C$-filtration of a module $D \in \mathcal D$, we infer that each module $D \in \mathcal D$ is isomorphic to a~direct sum of modules from $\mathcal C$. 

Let $D$ be an arbitrary module of cardinality $\lambda > 2^\kappa$ in $\mathcal D$. By the above, $D \cong \bigoplus_{C \in \mathcal C\setminus\{0\}} C^{(\kappa_C)}$ for some cardinals $\kappa_C \geq 0$ ($C \in \mathcal C$). Let $D^\prime = \bigoplus_{C \in \mathcal C ^\prime} C^{(\kappa_C)}$ where $\mathcal C ^\prime$ is the set of all $C^\prime \in \mathcal C$ with $\kappa_C < 2^\kappa$. By (iii), for each $C \in \mathcal C \setminus \mathcal C ^\prime$, $C^{(\kappa_C)} \cong P^{(\kappa_C)}$. Since $\card D = \lambda > \card {\mathcal C} \geq \card {\mathcal C^\prime}$, we infer that $\card {D ^\prime} < \lambda$, and $D \cong D^\prime \oplus \bigoplus_{C \in \mathcal C \setminus \mathcal C ^\prime} P^{(\kappa_C)}$. Since $\bigoplus_{C \in \mathcal C \setminus \mathcal C ^\prime} P^{(\kappa_C)}$ has cardinality $\lambda$, it is isomorphic to $P^{(\lambda)}$, whence $D \cong D^\prime \oplus P^{(\lambda)}$. By (iii), $D^\prime$ is isomorphic to a direct summand in $P^{(\lambda)}$, so $D^\prime \oplus X \cong P^{(\lambda)}$. 

We finish the proof of condition (i) by adapting Eilenberg's Trick to our setting as follows. First, we have $D \oplus X \cong P^{(\lambda)} \oplus P^{(\lambda)} \cong P^{(\lambda)}$. So $P^{(\lambda)} \cong (P^{(\lambda)})^{(\omega)} \cong D \oplus (X \oplus D)^{(\omega)} \cong D \oplus P^{(\lambda)}$. Thus $P^{(\lambda)} \cong D \oplus P^{(\lambda)} \cong D^\prime \oplus P^{(\lambda)} \oplus P^{(\lambda)} \cong D^\prime \oplus P^{(\lambda)} \cong D$. We conclude that $D \cong P^{(\lambda)}$, whence (i) holds.
\end{proof}  

\begin{lemma}\label{nsplitters} Let $R$ be a ring and $\mathcal D$ be a $\kappa^+$-deconstructible class of modules for some $\kappa \geq \card R + \aleph_0$. Assume that $\mathcal D$ contains no non-zero strong splitters of cardinality $\leq \kappa$. Then $\mathcal D$ is not $\lambda$-categorical for any $\lambda > 2^\kappa$.
\end{lemma}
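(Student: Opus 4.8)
The plan is to argue by contradiction: assuming $\mathcal D$ is $\lambda$-categorical for some $\lambda > 2^\kappa$, I will deduce that $\mathcal D$ is categorical in the \emph{regular} cardinal $\mu := (2^\kappa)^+$ (note $\mu \le \lambda$, since $\lambda > 2^\kappa$ forces $\lambda \ge (2^\kappa)^+$), and then contradict this using Lemma~\ref{regular}. The reason for passing to $\mu$ is that Lemma~\ref{regular} only rules out categoricity in regular cardinals, whereas the given $\lambda$ may be singular; so the heart of the matter is a purely algebraic downward transfer of categoricity from $\lambda$ to $\mu$, powered by the two conclusions of Lemma~\ref{Eilenberg}.

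First I would set up the contradiction target. Since $\mathcal D\neq\{0\}$ (otherwise $\mathcal D$ contains no module of cardinality $\lambda$ and is trivially not $\lambda$-categorical), and $\mathcal D=\Filt{\mathcal C}$ for $\mathcal C$ a representative set of its $\le\kappa$-presented members with $\card{\mathcal C}\le 2^\kappa$, there is a nonzero $C_0\in\mathcal C$; by hypothesis $C_0$ is not a strong splitter, and $\nu_0:=\card{C_0}+\card R+\aleph_0\le\kappa$. Lemma~\ref{regular} then yields that $\mathcal D$ is not categorical in any regular cardinal $\ge\nu_0^+$, in particular not in $\mu=(2^\kappa)^+\ge\kappa^+$. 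Thus it suffices to show that $\lambda$-categoricity forces $\mathcal D$ to \emph{be} $\mu$-categorical.

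For the transfer, fix $G:=M_{\mathcal C}^{(\mu)}\in\mathcal D$ of cardinality $\mu$, and let $A\in\mathcal D$ be arbitrary of cardinality $\mu$; I must prove $A\cong G$. Applying the moreover clause of Lemma~\ref{Eilenberg} with parameter $\kappa$ (legitimate since $\mathcal D$ is $\kappa^+$-deconstructible and $\kappa\le\mu\le\lambda$), $A$ decomposes as $\bigoplus_{i\in I}A_i$ with each $A_i$ a nonzero $\le\kappa$-presented module in $\mathcal D$, i.e.\ (up to isomorphism) a member of $\mathcal C$; a cardinality count forces $\card I=\mu$. As $\card{\mathcal C}\le 2^\kappa<\mu$ and $\mu$ is regular, a pigeonhole argument produces an isomorphism type $C^\ast\in\mathcal C$ occurring $\mu$ times, so $A\cong (C^\ast)^{(\mu)}\oplus A'$, where $A'$ is again a direct sum of at most $\mu$ modules from $\mathcal C$. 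From the first conclusion of Lemma~\ref{Eilenberg}, applied to the pair $C^\ast$ and $M_{\mathcal C}^{(\mu)}$, I obtain the homogeneity $(C^\ast)^{(\mu)}\cong (M_{\mathcal C}^{(\mu)})^{(\mu)}=M_{\mathcal C}^{(\mu)}=G$ (using $\mu\cdot\mu=\mu$). Finally $A'$ is isomorphic to a direct summand of $G$—each type of its summands occurs in $G$ with full multiplicity $\mu$—and since $G\cong G^{(\omega)}$ an Eilenberg swindle gives $A'\oplus G\cong G$; hence $A\cong G\oplus A'\cong G$. This establishes $\mu$-categoricity and contradicts the previous paragraph.

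The step I expect to be the main obstacle—and the one doing the real work—is this downward transfer: extracting genuine $\mu$-categoricity, rather than mere decomposability into $\le\kappa$-presented pieces, out of $\lambda$-categoricity. It is precisely here that \emph{both} halves of Lemma~\ref{Eilenberg} must be combined with the regularity of $\mu=(2^\kappa)^+$, so that the pigeonhole isolates a dominant summand $(C^\ast)^{(\mu)}\cong G$ and the swindle absorbs the remainder $A'$. Without the regularity of the target cardinal, neither the pigeonhole nor the clean cancellation is available, which is exactly the reason one cannot apply Lemma~\ref{regular} to a singular $\lambda$ directly and must route the argument through $(2^\kappa)^+$.
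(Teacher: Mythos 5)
Your proof is correct, but it takes a genuinely different route from the paper's. Both arguments dispose of the regular cardinal $(2^\kappa)^+$ by a direct appeal to Lemma~\ref{regular}, and both invoke the moreover clause of Lemma~\ref{Eilenberg} (with parameter $\kappa$) to decompose large members of $\mathcal D$ into $\le\kappa$-sized summands from $\mathcal C$. They diverge on how to handle $\lambda > (2^\kappa)^+$, where $\lambda$ may be singular. The paper uses the non-splitter hypothesis \emph{locally}: it builds a strictly increasing chain $(M_\alpha \mid \alpha \le (2^\kappa)^{++})$ whose successor steps beyond $(2^\kappa)^+$ are non-split extensions $0 \to M_\alpha \to M_{\alpha+1} \to C \to 0$ (available because no $C \in \mathcal C$ is a strong splitter), and then derives a contradiction at cardinality $(2^\kappa)^{++} \le \lambda$ by comparing this filtration of $N = M_{(2^\kappa)^{++}}$ with the filtration induced by the direct-sum decomposition of $N$ from Lemma~\ref{Eilenberg}: the two coincide on a club (cf.\ \cite[II.4.12]{EM}), producing a point where $M_{\alpha^\prime}$ both is and is not a direct summand of $M_{\alpha^\prime+1}$. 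You instead transfer categoricity \emph{downward} from $\lambda$ to the regular cardinal $\mu = (2^\kappa)^+$, which requires the \emph{first} clause of Lemma~\ref{Eilenberg} (homogeneity $(C^\ast)^{(\mu)} \cong M_{\mathcal C}^{(\mu)}$) in addition to the decomposition clause — a clause the paper's proof of this lemma never uses (it appears instead in the proof of Lemma~\ref{splitters}) — combined with the pigeonhole at the regular $\mu$ and the Eilenberg swindle to absorb the remainder $A^\prime$. What your route buys: it avoids the transfinite non-split-extension construction and the club-filtration comparison entirely, concentrating every use of the non-splitter hypothesis into the single black-box application of Lemma~\ref{regular}, and it yields the stronger structural byproduct that $\lambda$-categoricity above $2^\kappa$ forces every $D \in \mathcal D$ of cardinality $(2^\kappa)^+$ to be isomorphic to $M_{\mathcal C}^{((2^\kappa)^+)}$; in effect you reuse the swindle machinery of Lemma~\ref{splitters} where the paper keeps the two alternatives disjoint. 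What the paper's route buys: it works directly at $(2^\kappa)^{++}$ without needing the homogeneity clause here, and its filtration-comparison technique is the one that generalizes (it is the engine inside \cite[Theorem 2.12]{T}, i.e.\ Lemma~\ref{regular} itself). One small point to make explicit in your write-up: applying the moreover clause of Lemma~\ref{Eilenberg} ``with parameter $\kappa$'' tacitly requires a nonzero $M \in \mathcal D$ with $\card M + \aleph_0 = \kappa$; take $M = C_0^{(\kappa)}$, using closure of $\mathcal D$ under direct sums — the same gloss the paper makes when it writes ``Lemma~\ref{Eilenberg} for $\mu = \kappa$''. Your closing observation that regularity of the target cardinal is what makes both the pigeonhole and Lemma~\ref{regular} available is exactly right, and is the same reason the paper routes its contradiction through $(2^\kappa)^+$ and $(2^\kappa)^{++}$ rather than working at $\lambda$.
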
   

\begin{proof} We have $\mathcal D = \Filt{\mathcal C}$ where $\mathcal C$ denotes a representative set of all modules $M \in \mathcal D$ of cardinality $\leq \kappa$. By assumption, $\mathcal C$ contains no non-zero strong splitters. So by Lemma \ref{regular}, $\mathcal D$ is not $\lambda$-categorical for any regular cardinal $\lambda \geq \kappa^+$. In particular, $\mathcal D$ is not $\lambda$-categorical for $\lambda = (2^\kappa)^+$.

Assume that $\mathcal D$ is $\lambda$-categorical for a cardinal $\lambda > (2^\kappa)^+$. Lemma \ref{Eilenberg} for $\mu = \kappa$ implies that each module $D \in \mathcal D$ with $\kappa \leq \card D \leq \lambda$ is isomorphic to a direct sum of modules from~$\mathcal C$. Let $M$ be any non-zero module from $\mathcal C$. 

By induction on $\alpha < \lambda$, we will construct a strictly increasing chain of modules $\mathcal M = ( M_\alpha\in\mathcal D^{<\lambda} \mid \alpha \leq (2^\kappa)^{++} )$ as follows: $M_0 = 0$, and $M_\alpha = \bigcup_{\beta < \alpha} M_\beta$ for each limit ordinal $\alpha \leq (2^\kappa)^{++}$. Further, we let $M_{\alpha + 1} = M_\alpha \oplus M$ for each $\alpha < (2^\kappa)^+$. 

If $(2^\kappa)^+ \leq \alpha < \lambda$ and $M_\alpha$ is defined, then by the above, $M_\alpha$ is isomorphic to a direct sum of modules from $\mathcal C$. As $\card {\mathcal C} \leq 2^\kappa$ and $\card {M_\alpha} > 2^\kappa$, there exists $0 \neq C \in \mathcal C$ such that $C^{(\kappa)}$ is isomorphic to a direct summand in $M_\alpha$, that is, $M_\alpha \cong C^{(\kappa)} \oplus D$ for a module $D \in \mathcal D$. Since $C$ is not a strong splitter, there is a non-split short exact sequence $0 \to C^{(\kappa)} \hookrightarrow E \to C \to 0$ in $\mathcal C$, and hence a non-split short exact sequence $0 \to M_\alpha \hookrightarrow M_{\alpha + 1} \to C \to 0$ in $\mathcal D$, where $M_{\alpha + 1} \cong E \oplus D$.

Put $N := M_{(2^\kappa)^{++}}$. Then $N \in \mathcal D$, $\card N = (2^\kappa)^{++}$, so by Lemma \ref{Eilenberg}, $N$ is a direct sum of copies of modules from $\mathcal C$, say $N = \bigoplus_{\alpha < (2^\kappa)^{++}} N_\alpha$. For each $\alpha \leq (2^\kappa)^{++}$, let $P_\alpha = \bigoplus_{\beta < \alpha} N_\beta$. Then $\mathcal M$ and $\mathcal P = ( P_\alpha \mid \alpha \leq (2^\kappa)^{++} )$ are two $(2^\kappa)^{++}$-filtrations of the module $N$. So there is a club $A$ in $(2^\kappa)^{++}$ such that for each $\alpha \in A$, $M_\alpha = P_\alpha$. In particular, $A$ contains an ordinal $\alpha^\prime \geq (2^\kappa)^{+}$. Then $P_{\alpha^\prime}$ is a direct summand in $N$, and hence in $M_{\alpha^\prime + 1}$, but $M_{\alpha^\prime} = P_{\alpha^\prime}$ is not a direct summand in $M_{\alpha^\prime + 1}$ by our construction of the chain $\mathcal M$, a~contradiction.                
\end{proof} 

Now we can present a proof of Theorem \ref{maindec}:

\begin{proof} Assume that $\mathcal D$ is $\lambda$-categorical for some $\lambda \geq (2^\kappa)^+$. By Lemma \ref{nsplitters}, $\mathcal D$ contains a non-zero strong splitter $M$ of cardinality $\leq \kappa$, and by the assumption, condition (ii) of Lemma \ref{splitters} is satisfied. Thus, also condition (i) of that lemma holds true, q.e.d.   
\end{proof}

\appendix
\section{A variant of Theorem~\ref{t:closedl}}
\label{sec:appendix}

To show that the assumption that $\preceq$ refines direct summands is often redundant in Theorem~\ref{t:closedl}, we first prove a~general result concerning arbitrary AECs, not necessarily of modules. In what follows, $\mathcal L$ denotes a~first-order language.

An \emph{AEC of $\mathcal L$-structures} is a~pair $\mathbb A = (\mathcal A,\preceq)$, where $\mathcal A$ is a~class of $\mathcal L$-structures, satisfying axioms (A1)--(A3) from Definition~\ref{defroots} with `(sub)module' replaced by `(sub)structure'. The condition that $\lambda\geq |R|$ in (A3) is replaced by $\lambda\geq |\mathcal L|$.

Let $\kappa$ be a~regular infinite cardinal and $M$ an $\mathcal L$-structure. We say that a~set $\mathcal T$ consisting of $\mathcal L$-substructures of $M$ is \emph{$\kappa$-dense in $M$} provided that:
\begin{enumerate}
	\item Each $N\in\mathcal T$ has cardinality $<\!\kappa$.
	\item Each subset $X$ of (the universe of) $M$ with $|X|<\kappa$ is contained in an $N\in\mathcal T$.
	\item $\mathcal T$ is closed under union of $\subseteq$-increasing chains of length $<\!\kappa$.
\end{enumerate}

In what follows, we do not notationally distinguish $\mathcal L$-structures and their respective universes. The next result is proved similarly as \cite[Lemma~6.2]{SS2}. The main tool in its proof is borrowed from the Shelah's proof that an abelian group is strongly $\kappa$-free provided that it is $\kappa^+$-free.

\begin{proposition} \label{p:dense} Let $\mathbb A = (\mathcal A, \preceq)$ be an AEC of $\mathcal L$-structures. Let $\lambda > \LS(\mathbb A)$ be regular. Assume that $A$ is an $\mathcal L$-structure which possesses a~$\lambda^+$-dense subset consisting of elements from $\mathcal A$. Then $A\in\mathcal A$.
\end{proposition}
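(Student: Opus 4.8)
The plan is to realize $A$ as the union of a continuous $\preceq$-increasing chain of members of $\mathcal A$ and then to finish by axiom (A1)(i). Write $\mathcal T$ for the given $\lambda^+$-dense subset, so that every $N\in\mathcal T$ lies in $\mathcal A$ and has cardinality $\le\lambda$. First I would dispose of the trivial case $|A|\le\lambda$: density condition (ii) applied to $X=A$ yields $N\in\mathcal T$ with $A\subseteq N\subseteq A$, whence $A=N\in\mathcal A$. So I may assume $\chi:=|A|>\lambda$. Since $\lambda>\LS(\mathbb A)\ge\aleph_0$, the cardinal $\lambda$ is regular and uncountable, which is what makes the filtration/club machinery recalled after the definition of $\lambda$-filtration available.

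The heart of the argument is to manufacture \emph{enough $\preceq$-closed small pieces}. Call $N\in\mathcal T$ \emph{closed} if $N\preceq P$ for every $P\in\mathcal T$ with $N\subseteq P$, and let $\mathcal T^\ast$ be the family of closed pieces. The formal consequences of this definition are easy: if $N\subseteq N'$ are closed then $N\preceq N'$ at once; and, using density condition (iii) together with (A1)(i),(iii) and the transitivity of the partial order $\preceq$ (passing to continuous closures at limits), $\mathcal T^\ast$ is closed under unions of $\subseteq$-increasing chains of length $\le\lambda$. Consequently $\mathcal T^\ast$ is itself $\lambda^+$-dense \emph{provided it is cofinal in} $(\mathcal T,\subseteq)$. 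Establishing this cofinality—that every $N\in\mathcal T$ is contained in some closed $N^\ast\in\mathcal T$—is the crucial step, and it is exactly here that the device from Shelah's proof that $\lambda^+$-freeness implies strong $\lambda$-freeness enters. I would build $N^\ast=\bigcup_{n<\omega}N_n$ with $N_0=N$, where $N_{n+1}\in\mathcal T$ is obtained from $N_n$ by a ``closing-off'' that, for a suitably chosen cofinal system of small extensions of $N_n$, absorbs the strong-substructure corrections supplied by the Löwenheim–Skolem axiom (A3) and glued by (A2); regularity of $\lambda$ keeps every $N_n$ of size $\le\lambda$, so $N^\ast\in\mathcal T$ by density condition (iii). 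The content of the step is that $\omega$ rounds of correction already defeat \emph{every} later extension: given $P\supseteq N^\ast$ in $\mathcal T$, one pulls $P$ back to the stage at which the relevant small subset of $P$ was corrected and invokes (A2) to conclude $N^\ast\preceq P$.

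With $\mathcal T^\ast$ cofinal and $\lambda^+$-dense, the assembly is transfinite bookkeeping driven entirely by (A1) and transitivity. Enumerating the universe of $A$, I would recursively construct a continuous $\preceq$-increasing chain $(C_\eta\mid \eta<\chi)$ of members of $\mathcal A$ with $\bigcup_\eta C_\eta=A$, maintaining the invariant that each $C_\eta$ is the union of a continuous $\preceq$-increasing chain of closed pieces, and arranging these chains coherently so that the closed pieces of $C_\eta$ are among the closed pieces of $C_\beta$ for $\eta<\beta$. Each successor step enlarges the chain of closed pieces so as to absorb one further element of $A$, using cofinality and density of $\mathcal T^\ast$; limit stages are handled by (A1)(i). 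The $\preceq$-increasingness $C_\eta\preceq C_\beta$ is then automatic: writing $C_\eta=\bigcup_j N_j$ as a continuous $\preceq$-increasing chain of closed pieces, each $N_j$ is (by coherence) a term of the closed filtration of $C_\beta$, so $N_j\preceq C_\beta$ by (A1)(ii), whence $C_\eta\preceq C_\beta$ by (A1)(iii). Applying (A1)(i) to the whole chain gives $A=\bigcup_\eta C_\eta\in\mathcal A$.

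The main obstacle is the cofinality of $\mathcal T^\ast$ in the second paragraph. The set-theoretic closure properties of $\mathcal T^\ast$ are routine, but forcing the $\omega$-step correction process to \emph{terminate} in a genuinely closed piece—that is, guaranteeing it defeats every subsequent extension rather than merely the ones scheduled in advance—is the delicate point where one must be faithful to Shelah's original combinatorics. Equivalently, one may phrase the whole matter as a pressing-down argument: fixing a continuous filtration $A=\bigcup_{\alpha<\chi}A_\alpha$ by substructures that are unions of members of $\mathcal T$, one shows that the set of $\alpha$ with $A_\alpha\not\preceq A_{\alpha+1}$ is non-stationary (using the coincidence of $\lambda$-filtrations on a club and Fodor's lemma), and then restricts to a club on which the chain is $\preceq$-increasing. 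Throughout, the inequality $\lambda>\LS(\mathbb A)$ and the regularity of $\lambda$ are precisely what keep every piece of size $\le\lambda$ and every short union inside $\mathcal T$, so that the cardinal bookkeeping never leaves the dense family.
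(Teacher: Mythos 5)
There is a genuine gap, and it sits exactly where you flag it. Your endgame (assemble $A$ as a $\preceq$-directed union of small members of $\mathcal A$ and invoke (A1), resp.\ the standard fact on directed unions in AECs) is fine and matches the paper's final step; the entire weight of the argument therefore rests on the claim that the \emph{closed} pieces $\mathcal T^\ast$ are cofinal in $\mathcal T$, and this claim is neither proved by your sketch nor, in fact, the right statement to aim for. Closedness of $N^\ast$ quantifies over \emph{all} $P\in\mathcal T$ with $N^\ast\subseteq P$ --- a family of cardinality $>\lambda$ that cannot be scheduled in advance by an $\omega$-step (or any $\leq\lambda$-step) closing-off, and your pull-back via (A2) is unavailable: (A2) requires a common structure $C\in\mathcal A$ with $N^\ast\preceq C$ \emph{and} $P\preceq C$, and nothing supplies $P\preceq C$ for an arbitrary $P\in\mathcal T$ (members of a $\lambda^+$-dense family are merely in $\mathcal A$; they carry no $\preceq$-relations to one another). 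Note also that (A3) can only be applied inside structures already known to lie in $\mathcal A$, and the ambient $A$ is precisely what is not yet known to be in $\mathcal A$, so the ``corrections supplied by (A3)'' can only be harvested inside individual members of $\mathcal T$, which is where the unboundedness problem bites. Your fallback pressing-down variant fails too: for $|A|>\lambda^+$ the terms $A_\alpha$ of a filtration of $A$ are neither of size $\leq\lambda$ nor known to belong to $\mathcal A$, so ``$A_\alpha\preceq A_{\alpha+1}$'' is not even meaningful, and the club-coincidence fact you invoke concerns two $\lambda$-filtrations of a single $\lambda$-generated structure for regular $\lambda$, not filtrations of a structure of arbitrary cardinality. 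In the module analogue your $\mathcal T^\ast$-cofinality is essentially the assertion that $\kappa^+$-freeness yields a dense family of subgroups with $\kappa$-free (indeed well-placed) quotients over \emph{all} intermediate free subgroups --- the nontrivial Shelah theorem itself, not a formal consequence of the axioms.

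The paper circumvents exactly this obstruction by proving something strictly weaker than your closedness, via a game. One defines the game $\mathfrak G$ (and its relativizations $\mathfrak G_N$) in which Player I plays subsets of $A$ of size $<\lambda$ and Player II must respond with a $\preceq$-increasing chain of substructures of size $<\lambda$ lying in $\mathcal A$ and capturing Player I's sets. The $\lambda^+$-dense family is used once, to show Player I has \emph{no winning strategy}: one builds chains $(M_\alpha)$, $(K_\alpha\in\mathcal T)$ closing off under the strategy $s$, so that $M=\bigcup_{\alpha<\lambda}M_\alpha\in\mathcal T\subseteq\mathcal A$, the set $U=\{\beta \mid M_\beta\preceq M\}$ is club by (A3) and (A1)(iii), and (A2) makes $(M_\beta)_{\beta\in U}$ a legal winning play against $s$. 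Then $\mathcal D$ is taken to be the set of small $N\in\mathcal A$ for which Player I has no winning strategy in $\mathfrak G_N$; a back-and-forth zig-zag (unioning an $\omega$-chain alternating $\preceq$-extensions, then applying (A1) and (A2)) shows that $\mathcal D$ is $\preceq$-directed and that $N\subseteq L$ in $\mathcal D$ forces $N\preceq L$, whence $A=\bigcup\mathcal D\in\mathcal A$ by directed colimits \cite{K}. The crucial structural point, which your proposal misses, is that the members of $\mathcal D$ are required to cohere only \emph{with each other}, never with every member of $\mathcal T$; this weakening is what makes the delicate step provable, and it is exactly the mechanism of \cite[Lemma~6.2]{SS2} that the paper adapts.
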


\begin{proof} Let us denote the $\lambda^+$-dense set by $\mathcal T$. We can assume, without loss of generality, that $|A|>\lambda$; otherwise $A\in\mathcal A$ using the $\lambda^+$-density of $\mathcal T$. Let us also denote by $\mathcal S$ the set of $\mathcal L$-substructures of $A$ of cardinality $<\!\lambda$. Since $\lambda$ is regular and greater than the cardinality of $\mathcal L$, the set $\mathcal S$ is nonempty and $\lambda$-dense in $A$.

We define the following game $\mathfrak G$: It is played in turns by Player~I and Player~II. Player I starts and chooses successively subsets $X_0, X_1, \dots$ of the universe of $A$, each of cardinality $<\!\lambda$. Player II, on each $X_n$, replies with some $N_n\in\mathcal S$. At most $\omega$ turns are played; after the first $n+1$ turns, we will have the following sequence:
$$X_0, N_0, X_1, N_1, \dotsc , X_n, N_n.$$
\noindent
Player II wins if he manages to play, for each $n\in\omega$, so that $X_n\subseteq N_n\in\mathcal A$ and $N_{n-1}\preceq N_n$ (if $n>0$). Otherwise, Player I immediately wins. For each $N\in\mathcal S\cap\mathcal A$, we also consider the variant of $\mathfrak G$ denoted by $\mathfrak G_N$ in which Player II is additionally obliged to pick $N_0$ so that $N\preceq N_0$.

First, we prove that Player I possesses no winning strategy in $\mathfrak G$. For each $K\in\mathcal T$, we fix an $\subseteq$-increasing chain $(K^\alpha\mid |K^\alpha|<\lambda, \alpha<\lambda)$ with $\bigcup_{\alpha<\lambda} K^\alpha =$~$K$. Let $s$ be a~strategy for Player I in $\mathfrak G$, i.e., a~function that gives the first move~$X_0$, and it decides what the answer should be to the play by Player II; so $X_n = s(N_0, N_1, \dots , N_{n-1})$ for $n>0$. We want to beat the strategy $s$.

Using the properties of $\mathcal T$, we easily construct $\subseteq$-increasing chains $(M_\alpha \in \mathcal S\mid \alpha<\lambda)$ and $(K_\alpha\in\mathcal T\mid \alpha<\lambda)$ such that the following holds:

\begin{enumerate}[label={(\arabic*)}]
	\setcounter{enumi}{-1}
	\item $X_0\subseteq M_0$;
	\item $M_\alpha = \bigcup _{\beta <\alpha} M_\beta$ for $\alpha <\lambda$ limit;
	\item $M_\alpha\subseteq K_\alpha$ for each $\alpha<\lambda$;
	\item $M_{\alpha + 1}\supsetneq M_\alpha\cup \bigcup _{\beta\leq\alpha}K^\alpha_\beta$ for each $\alpha<\lambda$;
	\item for each $\alpha <\lambda$, $s(M_{\alpha _0}, M_{\alpha _1}, \dotsc , M_{\alpha _n})\subseteq M_{\alpha + 1}$, whenever $n\in\omega$, $\alpha _n\leq \alpha$ and $M_{\alpha _0}\preceq M_{\alpha _1}\preceq \dotsb \preceq M_{\alpha _n}$ is played by Player II according to the rules (against the strategy $s$).
\end{enumerate}

Put $M = \bigcup _{\alpha <\lambda} M_\alpha$. We have $|M| = \lambda$ and $M = \bigcup_{\alpha<\lambda} K_\alpha\in\mathcal T$ by $(2)$ and $(3)$. In particular, $M\in\mathcal A$, and hence the set $U = \{\beta<\lambda \mid M_\beta\preceq M\}$ is closed and unbounded (recall that $\lambda>\LS(\mathbb A)$; we use (A3) and (A1)(iii) here together with $(1)$ above). Notice that, for $\beta<\gamma$ from $U$, we have $M_\beta\preceq M_\gamma$ as a~consequence of the axiom (A2) for AECs. By (4), Player II is going to beat the strategy~$s$, if he chooses the elements $N_n$ as the appropriate $M_\beta$ for $\beta\in U$.

Let us denote by $\mathcal D$ the subset of $\mathcal S$ consisting of all the $N\in\mathcal A$ for which Player~I does not possess a~winning strategy in $\mathfrak G_N$. Notice that, for any $X_0$ played by Player~I in $\mathfrak G$, Player~II can choose $N_0\in\mathcal D$: if not, Player~I would have a~winning strategy in $\mathfrak G$ already, in contradiction with what we have just proved. Analogously, for any $X_0$ played by Player~I in $\mathfrak G_N$ where $N\in\mathcal D$, Player~II can choose $N_0\in\mathcal D$ again.

To sum up: $\mathcal D$ is nonempty and, for each $N\in\mathcal D$ and $X\subseteq A$ of cardinality $<\!\lambda$, there is $N^\prime\in\mathcal D$ such that $X\subseteq N^\prime$ and $N\preceq N^\prime$. This allows us, as in the proof of \cite[Lemma~6.7]{V2}, to easily show that, for any $N,L\in\mathcal D$, there exists $B\in\mathcal S\cap\mathcal A$ such that $N\preceq B$ and $L \preceq B$: indeed, we have $B_0\in\mathcal D$ such that $N\preceq B_0$ and $L\subseteq B_0$; then we choose $B_1\in\mathcal D$ with $L\preceq B_1$ and $B_0\subseteq B_1$; we continue recursively, picking $B_{n+1}\in\mathcal D$ with $B_{n-1}\preceq B_{n+1}$ and $B_n\subseteq B_{n+1}$; finally, we put $B = \bigcup_{n<\omega}B_n$ which has the desired properties by the axiom~(A1). Consequently, if in addition $N\subseteq L$, then $N\preceq L$ by the axiom (A2). It follows that $\mathcal D$ is $\preceq$-directed and $A = \bigcup \mathcal D$. Thus $A\in\mathcal A$, again by the axiom (A1) for AECs, cf.\ \cite[Lemma~1.2]{K}.
\end{proof}

To prove the promised alternate version of Theorem~\ref{t:closedl}, we need the following additional set-theoretic assumption (consistent with ZFC) from \cite{BS}:

\medskip

\noindent\textbf{Assumption $(*)$.} \textit{For each infinite regular cardinal $\theta$, there is a~proper class of cardinals $\kappa$ such that:}
\begin{enumerate}
	\item \textit{There exists a~non-reflecting stationary set $E\subseteq\kappa^+$ consisting of ordinals with cofinality $\theta$, and}
 	\item \textit{$\kappa^{<\theta} = \kappa$.}
\end{enumerate}


\begin{theorem}\label{t:closedl2} Assume $(*)$. Let $\mathcal A=\Filt{\mathcal S}$ where $\mathcal S$ is a~set of (right $R$-)modules. Assume that $\mathcal A$ is closed under direct summands. Let $\preceq$ be a~partial order on $\mathcal A$ such that $\mathbb A = (\mathcal A, \preceq)$ is an AEC of modules. Then the class $\mathcal A$ is closed under direct limits.
\end{theorem}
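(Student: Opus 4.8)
The plan is to re-run the proof of Theorem~\ref{t:closedl} almost verbatim, changing only the single step where the refinement of direct summands was used---the verification that the tree module $L$ of Lemma~\ref{l:3.4} lies in $\mathcal A$---and to replace that step by an application of Proposition~\ref{p:dense}. Proposition~\ref{p:dense} is exactly the tool that certifies membership in $\mathcal A$ from a density (``local membership'') hypothesis, without any external bookkeeping of the relation $\preceq$; it is what the hypothesis that $\mathbb A=(\mathcal A,\preceq)$ is a full AEC buys us, and it is meant to do the work that refinement did before.

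First I would reduce, exactly as in Theorem~\ref{t:closedl}, to showing that $\mathcal A$ contains the direct limit $C=\varinjlim\mathcal C$ of a well-ordered directed system $\mathcal C=(C_\alpha,f_{\beta\alpha}\mid\alpha\le\beta<\mu)$ with all $C_\alpha\in\mathcal A$, indexed by an infinite regular cardinal $\mu$. I then choose a strong limit singular cardinal $\lambda$ of cofinality $\mu$, large enough to exceed $\LS(\mathbb A)$ and to make every module of $\mathcal S$ be $\lambda$-presented, so that Lemma~\ref{l:3.4} applies to $\mathcal C$; and I apply Assumption~$(*)$ with $\theta=\mu$ to fix a regular cardinal $\kappa\ge\lambda$ with $\kappa^{<\mu}=\kappa$ carrying a non-reflecting stationary set $E\subseteq\kappa^+$ of ordinals of cofinality $\mu$. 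Lemma~\ref{l:3.4} then supplies the module $L$, the epimorphism $\pi\colon N^{(\lambda^\mu)}\to L$ with $N=\bigoplus_{\alpha<\mu}C_\alpha$, and the short exact sequence $0\to D\to L\to C^{(\lambda^\mu)}\to 0$ with $|D|\le\lambda$. Once $L\in\mathcal A$ is known, the last paragraph of the proof of Theorem~\ref{t:closedl}---extracting via Lemma~\ref{Hill} a submodule $P$ with $|P|\le\lambda$ and $L/P\in\mathcal A$, then splitting off a copy of $C$ and using closure under direct summands---applies word for word and yields $C\in\mathcal A$.

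The entire content therefore lies in proving $L\in\mathcal A$, and here I would invoke Proposition~\ref{p:dense} with the regular parameter $\kappa>\LS(\mathbb A)$: it suffices to exhibit a $\kappa^+$-dense family of submodules of $L$, each lying in $\mathcal A$. The natural candidates are the modules $L_S=\pi(N^{(S)})$ for $S\subseteq\lambda^\mu$ with $|S|\le\kappa$. Since $|N|\le\lambda\le\kappa$, each such $L_S$ has cardinality $\le\kappa$; every subset of $L$ of size $\le\kappa$ lifts through $\pi$ to a set supported on some such $S$; and the identity $\bigcup_i L_{S_i}=L_{\bigcup_i S_i}$ gives closure under unions of $\le\kappa$-chains (using $\kappa$ regular and $\kappa^{<\mu}=\kappa$). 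For finite $S$ the module $L_S$ is, by Lemma~\ref{l:3.4}, a direct summand of $L$ that is a direct sum of copies of the $C_\alpha\in\mathcal A$, hence lies in $\mathcal A$. The genuinely hard part---and the place where the non-reflecting stationary set is indispensable---is to verify $L_S\in\mathcal A$ for infinite $S$: as $S$ accumulates at cofinality $\mu$, a continuous filtration of $L_S$ threatens to acquire the colimit $C$ itself as a factor, and $C$ is not yet known to lie in $\mathcal A$ (that is precisely what we are proving). The non-reflection of $E$, together with $\kappa^{<\mu}=\kappa$, is what lets a back-and-forth/filtration construction---in the spirit of Shelah's passage from $\kappa^+$-freeness to strong $\kappa$-freeness that underlies Proposition~\ref{p:dense}---route around these cofinality-$\mu$ obstructions, so that the index sets $S$ with $L_S\in\mathcal A$ stay cofinal among the $\le\kappa$-sized sets and closed under the relevant unions, thereby delivering the required $\kappa^+$-dense family.

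I expect this last point---controlling the cofinality-$\mu$ limit stages in the filtrations of the $L_S$ by means of the non-reflecting stationary set $E$---to be the main obstacle. Everything else is routine: the reduction to well-ordered systems and the descent from $L$ back to $C$ are copied from Theorem~\ref{t:closedl}, and the cardinal arithmetic guaranteeing clauses (i)--(iii) of $\kappa^+$-density follows directly from $\lambda\le\kappa$, $\kappa$ regular, and $\kappa^{<\mu}=\kappa$.
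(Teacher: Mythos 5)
Your plan correctly isolates where the old proof used the refinement hypothesis, but the step you defer---$L_S\in\mathcal A$ for infinite $S\subseteq\lambda^\mu$---is not a technical detail to be ``routed around'' by non-reflection; it is the entire problem, and your proposal contains no argument for it. In the proof of Theorem~\ref{t:closedl}, membership of $L_S$ in $\mathcal A$ was proved by induction: the base case (finite $S$) used that $L_{S'}$ is a direct summand of $L_S$ \emph{together with} the hypothesis that $\preceq$ refines direct summands, and the inductive step applied (A1)(i) to a chain that was $\preceq$-increasing only because of that base case. In a general AEC a direct summand need not be a strong submodule, so for merely $\subseteq$-increasing chains the axioms (A1)--(A3) give you nothing, and the induction cannot even start. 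Nor can Proposition~\ref{p:dense} bootstrap it: to certify $L\in\mathcal A$ you must exhibit a $\kappa^+$-dense family of submodules \emph{already known to lie in} $\mathcal A$, and your candidate family $\{L_S\mid |S|\leq\kappa\}$ presupposes exactly the unproven memberships (restricting to finite-support summands fails closure under unions of chains, and applying Proposition~\ref{p:dense} at smaller cardinals degenerates once $|L_S|\leq\LS(\mathbb A)$). Finally, the set-theoretic mechanism you invoke is misattributed: the non-reflecting stationary set of Assumption~$(*)$ plays no role in the proof of Proposition~\ref{p:dense} (which is pure ZFC game-theoretic back-and-forth), and nothing in Lemma~\ref{l:3.4} interacts with it; there is no known way to make the tree modules $L_S$ land in $\mathcal A$ under the hypotheses of Theorem~\ref{t:closedl2}.

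The paper's actual proof abandons Lemma~\ref{l:3.4} altogether. It invokes \cite[Proposition~3.3]{BS}---and this is precisely where Assumption~$(*)$ and the non-reflecting stationary set are consumed---to build, for $\lambda=\kappa^+$ with $\kappa>\LS(\mathbb A)+|N|$, a module $L$ of cardinality $\lambda$ with a filtration $(L_\alpha\mid\alpha\leq\lambda)$ whose members $L_\alpha$ lie in $\mathcal A$ and have size $<\lambda$, together with a stationary set $F\subseteq\lambda$ such that $\varinjlim_{\alpha<\theta}N_\alpha\cong L_{\delta+1}/L_\delta$ splits in $L_\varepsilon/L_\delta$ for all $\delta\in F$, $\delta<\varepsilon<\lambda$. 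Crucially, the epimorphism $\pi\colon N^{(\lambda)}\to L$ from that construction satisfies $\im(\pi\restriction N^{(S)})\in\mathcal A$ for every $S$ of cardinality $<\lambda$ \emph{by construction}, which is what supplies the dense system feeding Proposition~\ref{p:dense} to get $L\in\mathcal A$. The descent to the limit is then also different from your Hill-based plan: one takes any $\mathcal A^{<\lambda}$-filtration $(M_\alpha)$ of $L$, uses the club agreement of two $\lambda$-filtrations to find $\delta\in F$ and $\varepsilon>\delta$ with $M_\delta=L_\delta$ and $M_\varepsilon=L_\varepsilon$, and concludes that $\varinjlim N_\alpha$ splits in $M_\varepsilon/M_\delta\in\mathcal A$, so closure under direct summands finishes the proof---no short exact sequence $0\to D\to L\to C^{(\lambda^\mu)}\to 0$ and no application of Lemma~\ref{Hill} is needed. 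In short, your reduction and your identification of the critical step are sound, but the proposal is incomplete at its core, and the missing step cannot be filled by the tools you name.
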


\begin{proof} It is enough to show that $\mathcal A$ is closed under direct limits of continuous well-ordered directed systems $(N_\alpha,g_{\alpha\beta}\colon N_\beta\to N_\alpha \mid \beta<\alpha<\theta)$ where $\theta$ is a~regular infinite cardinal. Let such a~directed system, with $N_\alpha\in\mathcal A$ for each $\alpha<\theta$, be given and assume that $\varinjlim_{\alpha<\theta}N_\alpha\neq 0$. Put $N = \bigoplus_{\alpha<\theta} N_\alpha$. We use \cite[Proposition~3.3]{BS} where $\kappa$ satisfies that $\kappa>\LS(\mathbb A)+|N|$ and that all modules in $\mathcal S$ are $\kappa$-presented. For $\lambda = \kappa^+$, we obtain a~stationary subset $F\subseteq \lambda$ and an~$L\in$ Mod-$R$ possessing a~filtration $\mathcal L = (L_\alpha\mid \alpha\leq\lambda)$ satisfying
\begin{enumerate}
	\item[(a)] $L_\alpha\in\mathcal A$ and $|L_\alpha|<\lambda$ for each $\alpha<\lambda$, and
	\item[(b)] for every $\delta\in F$ and $\delta<\varepsilon<\lambda$, we have $\varinjlim_{\alpha<\theta}N_\alpha\cong L_{\delta+1}/L_\delta$, and the latter module splits in $L_{\varepsilon}/L_\delta$.
\end{enumerate}

It follows that $|L| = \lambda$. Putting $N^\prime = N^{(\lambda)}$, it is actually shown in the proof of \cite[Proposition~3.3]{BS} that there exists an epimorphism $\pi\colon N^\prime\to L$ such that, for each $S\subset \lambda$ of cardinality $<\!\lambda$, $\im(\pi\restriction N^{(S)})\in\mathcal A$. If $\kappa$ is regular, this means that $L$ possesses a~$\lambda$-dense system of submodules from $\mathcal A$. If $\kappa$ is singular, there has to be a~regular cardinal $\mu<\kappa$ such that $\mu>\LS(\mathbb A)+|N|$ and we see that $L$ possesses a~$\mu^+$-dense system of submodules from $\mathcal A$. Using Proposition~\ref{p:dense}, we get $L\in\mathcal A$.

To conclude our proof, we fix any $\mathcal A^{<\lambda}$-filtration of $L$ of the form $(M_\alpha \mid \alpha\leq\lambda)$. Then $|M_\alpha|<\lambda$ for each $\alpha<\lambda$. Using (a) above, it follows that the set \[U = \{\alpha<\lambda\mid M_\alpha = L_\alpha\}\] is closed and unbounded. By the part (b), we pick any $\delta\in F\cap U$ and $\varepsilon\in U$ with $\delta<\varepsilon$, and deduce that $\varinjlim_{\alpha<\theta}N_\alpha$ is isomorphic to a~direct summand in $L_{\varepsilon}/L_\delta = M_{\varepsilon}/M_\delta \in\mathcal A$. It follows that $\varinjlim_{\alpha<\theta}N_\alpha\in\mathcal A$ since $\mathcal A$ is closed under direct summands.
\end{proof}

\begin{acknowledgment}
The authors thank Kate\v{r}ina Fukov\'{a} for valuable comments.   
\end{acknowledgment}

\medskip

\end{document}